\newtheorem{thm}{Theorem}[section]
\newtheorem{prop}[thm]{Proposition}
\newtheorem{lem}[thm]{Lemma}
\newtheorem{cor}[thm]{Corollary}
\numberwithin{equation}{section}
\theoremstyle{definition}
\newtheorem{definition}[thm]{Definition}
\newtheorem{remark}[thm]{Remark}
\newtheorem{ex}[thm]{Example}
\newcommand{\qqed}{\hspace*{\fill}$\Box$}
\newcommand{\im}{\operatorname{im}}
\newcommand{\Db}{{\rm D}^{\rm b}}
\newcommand{\Aut}{{\rm Aut}}
\newcommand{\End}{{\rm End}}
\newcommand{\Hom}{{\rm Hom}}
\newcommand{\Spec}{{\rm Spec}}
\newcommand{\Perf}{{\rm Perf}}
\newcommand{\id}{{\rm id}}
\newcommand{\ka}{{\mathcal A}}
\newcommand{\kb}{{\mathcal B}}
\newcommand{\kc}{{\mathcal C}}
\newcommand{\ki}{{\mathcal I}}
\newcommand{\ko}{{\mathcal O}}
\newcommand{\kp}{{\mathcal P}}
\newcommand{\kt}{{\mathcal T}}
\newcommand{\ku}{{\mathcal U}}
\newcommand{\IN}{\mathbb{N}}
\newcommand{\IQ}{\mathbb{Q}}
\newcommand{\IZ}{\mathbb{Z}}
\DeclareMathOperator{\RHom}{\textit{R}\mathcal{H}\textit{om}}
\renewcommand{\to}{\xymatrix@1@=15pt{\ar[r]&}}
\renewcommand{\rightarrow}{\xymatrix@1@=15pt{\ar[r]&}}
\renewcommand{\mapsto}{\xymatrix@1@=15pt{\ar@{|->}[r]&}}
\renewcommand{\twoheadrightarrow}{\xymatrix@1@=15pt{\ar@{->>}[r]&}}
\renewcommand{\hookrightarrow}{\xymatrix@1@=15pt{\ar@{^(->}[r]&}}
\newcommand{\congpf}{\xymatrix@1@=15pt{\ar[r]^-\sim&}}
\renewcommand{\cong}{\simeq}
\begin{document}

\title[Scalar extensions of triangulated categories]{Scalar extensions of triangulated categories}
\author[P.\ Sosna]{Pawel Sosna}

\address{Dipartimento di Matematica ``F.\ Enriques'', Universit\`a degli Studi di Milano, Via Cesare Saldini 50, 20133 Milan, Italy}
\email{pawel.sosna@guest.unimi.it}

\subjclass[2010]{18E30, 14F05}

\keywords{triangulated categories, scalar extensions, linearisations}

\begin{abstract} \noindent
Given a triangulated category $\kt$ over a field $K$ and a field extension $L/K$, we investigate how one can construct a triangulated category $\kt_L$ over $L$. Our approach produces the derived category of the base change scheme $X_L$ if $\kt$ is the bounded derived category of a smooth projective variety over $K$ and the field extension is finite and Galois. We also investigate how the dimension of a triangulated category behaves under scalar extensions.

\vspace{-2mm}\end{abstract}
\maketitle

\maketitle
\let\thefootnote\relax\footnotetext{This work was supported by the grant SO 1095/1-1 and the SFB/TR 45 `Periods,
Moduli Spaces and Arithmetic of Algebraic Varieties' of the DFG
(German Research Foundation).}

\section{Introduction}
Base change techniques are ubiquitous in algebraic geometry. In particular, scalar extensions are often used to study properties of geometric objects.
In recent years it also became commonplace to study the geometry of a smooth projective variety $X$ over some field $K$ via the derived category $\Db(X)$ of its abelian category of coherent sheaves ${\rm{Coh}}(X)$. Under our assumptions $\Db(X)$ is a $K$-linear triangulated category, and in fact the derived category of an abelian category is one major source of examples of triangulated categories. We will use the example from geometry as a guide to introduce scalar extensions for triangulated categories. Thus, to any field extension $L/K$ and a $K$-linear triangulated category $\kt$ we would like to associate an $L$-linear triangulated category $\kt_L$. Of course, the construction should produce the expected result in the standard examples, e.g.\ if $\kt$ is $\Db(X)$ for $X$ as above, then the base change category $\kt_L$ should be equivalent to the derived category of $X_L=X\times_K L$. We will often assume that the field extension $L/K$ is finite, although some of the arguments do indeed generalise to arbitrary extensions.\smallskip

The problem one faces in proposing a reasonable construction is that triangulated categories are not as rigid as, say, abelian categories. For the latter categories, as well as for additive ones without additional structure, there is in fact a well-known and fairly simple construction (see e.g.\ \cite{AZ} or \cite{LvdB}), which gives the expected results if applied to e.g.\ the abelian category of (quasi-)coherent sheaves on a scheme $X$. This construction is recalled in detail in section two. There is also a slightly different approach which appears in \cite{Deligne} or in \cite{Stalder} and which is structurally similar, but uses Ind-objects. We can avoid this more technical construction, mostly because we usually work with finite extensions. The reason why this approach cannot work for a triangulated category basically boils down to the fact that the cone is not functorial. To circumvent this problem we shall work with triangulated categories arising as homotopy categories of pretriangulated DG-categories (in other words, triangulated categories admitting an enhancement). The advantage is that in the latter ones the cone is in fact functorial. In section three we will therefore recall the basic definitions and properties of (pretriangulated) differential graded categories and introduce scalar extensions for them.\smallskip

In section four we present the definition of base change: The basic idea would be to write $\kt$ as the homotopy category of a pretriangulated DG-category $\ka$, do base change for $\ka$ and consider the homotopy category of the base change category. However, this simple direct approach does not work and one has to make the definition slightly more involved. We then prove our
\smallskip

\noindent {\bf Main result 1 (Propositions \ref{smooth-bc} and \ref{noeth-bc})} {\it Given a triangulated category $\kt$ over $K$ which arises as the homotopy category of a pretriangulated DG-category, there is a natural way to define an $L$-linear triangulated category $\kt_L$. If $X$ is a smooth projective variety over $K$ and $\kt\cong\Db(X)$, then $\kt_L\cong\Db(X_L)$. If $L/K$ is finite, then the last statement holds for any Noetherian scheme $X$.}
\smallskip

Our construction a priori depends on the pretriangulated DG-category, but see Proposition \ref{ind-choice} for a partial result concerning independence of this choice. We conclude the section by sketching an alternative approach towards the definition of base change which uses the close connection between so-called algebraic triangulated categories (basically all examples in algebraic geometry and representation theory are of this type) and derived categories of DG-categories. This approach is probably more elegant but the DG-categories appearing in it are more difficult to describe. 
In the last section we consider the behaviour of the dimension of a triangulated category under base change. We prove
\smallskip

\noindent {\bf Main result 2 (Corollary \ref{dim-ab-bc})} {\it Let $\kc$ be an abelian category with enough injectives and with generators and let $L/K$ be a finite Galois extension. Then $\dim(\Db(\kc)_L)= \dim(\Db(\kc))$. In particular, $\dim(\Db(X_L))=\dim(\Db(X))$ for any Noetherian scheme $X$.}
\smallskip

\noindent{\bf Acknowledgements.} This paper is based on the last chapter of my PhD thesis \cite{PS} which was supervised by Daniel Huybrechts whom I would like to thank for a lot of fruitful discussions. I am grateful to Bernhard Keller and Valery Lunts for valuable suggestions. The final writeup of this article was done during my research stay at the Universit\`{a} degli Studi di Milano and I would like to thank the department of mathematics and the complex geometry group for their hospitality.

\section{Scalar extensions for additive categories}

\begin{definition}
Let $\kc$ be a $K$-linear additive category and let $L/K$ be a field extension. The \emph{base change category} $\kc_L$ is defined as follows:\newline
$\bullet$ Objects of $\kc_L$ are pairs $(C,f)$, where $C \in \kc$ and $f\colon L \rightarrow \End_\kc(C)$ is a morphism of $K$-algebras.\newline
$\bullet$ Morphisms between $(C,f)$ and $(D,g)$ are given by morphisms $\alpha\colon C \rightarrow D$ in $\kc$ compatible with the given actions of $L$, i.e.\ for any $l \in L$ the diagram 
\[\begin{xy}
\xymatrix{ C \ar[d]_{f(l)} \ar[r]^\alpha & C \ar[d]^{g(l)} \\
					 D \ar[r]^\alpha & D}  
\end{xy}\]
commutes.
\end{definition}

We call the datum $(C,f)$ an $L$-\emph{module structure} on $C$. 

\begin{lem}
The category $\kc_L$ is additive and comes with a natural $L$-linear structure.
\end{lem}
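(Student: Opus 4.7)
The plan is to transport the additive structure from $\kc$ to $\kc_L$ object by object, and then equip the hom-sets with an $L$-action coming from either of the two (a priori) equal expressions furnished by the compatibility diagram.

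First I would check that $\kc_L$ is preadditive. Given morphisms $\alpha, \beta \colon (C,f) \to (D,g)$, the sum $\alpha+\beta$ computed in $\Hom_\kc(C,D)$ satisfies the compatibility diagram because that condition is additive in the morphism: $g(l)(\alpha+\beta) = g(l)\alpha + g(l)\beta = \alpha f(l) + \beta f(l) = (\alpha+\beta)f(l)$. Thus $\Hom_{\kc_L}((C,f),(D,g))$ is a subgroup of $\Hom_\kc(C,D)$, and composition is $\IZ$-bilinear because it already is in $\kc$.

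Next I would exhibit a zero object and biproducts. The zero object of $\kc$ has $\End(0) = 0$, the zero ring, so there is a unique $K$-algebra map $L \to 0$; every morphism in $\kc$ to or from $0$ trivially satisfies the compatibility diagram, so $(0, 0)$ is a zero object in $\kc_L$. For $(C,f)$ and $(D,g)$, I would take the biproduct $C \oplus D$ in $\kc$ equipped with the diagonal action $(f \oplus g)(l) := i_C f(l) p_C + i_D g(l) p_D$, where $i_\bullet$ and $p_\bullet$ are the canonical injections and projections. Using the biproduct identities, one checks $f \oplus g$ is a $K$-algebra map and that $i_C, i_D, p_C, p_D$ become morphisms in $\kc_L$; the universal property is then inherited from $\kc$.

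Finally, for the $L$-linear structure, I would define $l \cdot \alpha := g(l) \circ \alpha$, which equals $\alpha \circ f(l)$ by the compatibility condition. The main points to verify — and the only step worth a pause — are that $l \cdot \alpha$ is itself a morphism in $\kc_L$ and that composition is $L$-bilinear. The first uses commutativity of $L$: $g(l')\bigl(g(l)\alpha\bigr) = g(l'l)\alpha = g(ll')\alpha = g(l)\alpha f(l') = \bigl(g(l)\alpha\bigr)f(l')$. For the second, if $\beta \colon (D,g) \to (E,h)$ is another morphism, then $(l \cdot \beta) \circ \alpha = h(l)\beta\alpha = l \cdot (\beta \circ \alpha)$, and $\beta \circ (l \cdot \alpha) = \beta g(l) \alpha = h(l)\beta\alpha$, the last equality invoking compatibility of $\beta$. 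Unitality and associativity of the $L$-action are immediate from those of $g$. The main ``obstacle'' is precisely this: recognising that commutativity of $L$ is what makes the $L$-action both land in $\kc_L$-morphisms and commute with composition on both sides; everything else is routine inheritance from the additive structure of $\kc$.
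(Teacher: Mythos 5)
Your proof is correct and follows essentially the same route as the paper: the zero object $(0,0)$, the direct sum $(C\oplus D, f\oplus g)$, and the $L$-action $l\cdot\alpha := g(l)\circ\alpha = \alpha\circ f(l)$ are exactly the structures the paper exhibits, with your write-up merely supplying the routine verifications (including the use of commutativity of $L$) that the paper leaves to the reader.
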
 

\begin{proof}
The verification is straightforward: The zero object is $(0,0)$, the direct sum of $(C,f)$ and $(D,g)$ is given by $(C\oplus D, f\oplus g)$, the $K$-linearity is obvious. As to the $L$-linearity: For a scalar $l \in L$ and an $\alpha \in \Hom_{\kc_L}((C,f), (D,g))$ define $l\cdot \alpha:=\alpha \circ f(l)=g(l) \circ \alpha$. It is then easy to check that this is well-defined and thus $\kc_L$ is indeed $L$-linear.
\end{proof}

\begin{lem}
If $\kc$ is an abelian category, then $\kc_L$ is also abelian.
\end{lem}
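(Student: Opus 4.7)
My plan is to reduce everything to the abelian structure already present on $\kc$, using the forgetful functor $U\colon \kc_L \to \kc$ sending $(C,f)\mapsto C$ and $\alpha \mapsto \alpha$. The previous lemma gives us the additive structure with zero object and biproducts, so the remaining axioms to verify are: every morphism admits a kernel and a cokernel, every monomorphism is a kernel, and every epimorphism is a cokernel.

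The main construction is to build kernels and cokernels in $\kc_L$ by lifting those of $\kc$. Given a morphism $\alpha\colon (C,f)\to(D,g)$ and the kernel $i\colon K\to C$ of $\alpha$ in $\kc$, I would observe that for each $l\in L$ one has $\alpha\circ f(l)\circ i = g(l)\circ \alpha\circ i = 0$, so the universal property of $i$ yields a unique $h(l)\colon K\to K$ with $i\circ h(l)=f(l)\circ i$. The uniqueness clause immediately forces $h\colon L\to\End_\kc(K)$ to be a morphism of $K$-algebras, and $i$ becomes a morphism $(K,h)\to(C,f)$ in $\kc_L$. Checking the universal property of $(K,h)$ as a kernel in $\kc_L$ is formal: any test morphism $(E,e)\to(C,f)$ killing $\alpha$ factors uniquely through $i$ in $\kc$, and the factorisation is automatically $L$-linear by one more application of uniqueness. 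Cokernels are constructed dually.

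For the last axiom, the key observation is that $U$ preserves and reflects zero objects as well as the kernels and cokernels just constructed, and therefore preserves and reflects monos and epis. So if $\alpha$ is a mono in $\kc_L$, then $\alpha$ is a mono in the abelian category $\kc$ and equals $\ker(\coker_\kc \alpha)$ there. Taking the cokernel $p\colon (D,g)\to(Q,k)$ in $\kc_L$, its underlying map is $\coker_\kc\alpha$, and the induced $L$-structure on its kernel in $\kc_L$ is, by uniqueness against $\alpha$ being mono, forced to equal $f$. Hence $\alpha=\ker(\coker\alpha)$ in $\kc_L$, and dually for epimorphisms.

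The only mildly subtle point is the interplay between the two abelian axioms in the last paragraph: one must be sure that the $L$-module structure on the kernel of a cokernel, computed intrinsically in $\kc_L$ via the universal property, coincides with the original structure on the domain of a monomorphism. This is where the rigidity coming from $\alpha$ being mono (combined with the relation $\alpha\circ f(l)=g(l)\circ\alpha$) is used, and it is essentially the only step that is not a pure transport along $U$.
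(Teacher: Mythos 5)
Your argument is correct and takes essentially the same route as the paper: the kernel (and dually the cokernel) of $\alpha\colon (C,f)\to(D,g)$ is constructed by transporting the $L$-module structure onto the kernel in $\kc$ via the universal property, using exactly the identity $\alpha\circ f(l)\circ i=g(l)\circ\alpha\circ i=0$. The paper dismisses the image-equals-coimage axiom as ``equally easy to check,'' whereas you spell it out via the faithful forgetful functor preserving and reflecting kernels, cokernels, monos and epis; that verification is sound and fills in the only detail the paper omits.
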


\begin{proof}
Let $\alpha\colon (C,f) \rightarrow (D,g)$ be a morphism in $\kc_L$. We first have to show the existence of a kernel and a cokernel. We will show the existence of the former, the latter is similar. Forgetting the additional structures there exists a kernel $A$ in $\kc$. One can define a canonical morphism $h\colon L \rightarrow \End_\kc(A)$ as follows: Let $l \in L$ be arbitrary and consider the commutative diagram
\[
\begin{xy}
\xymatrix{ A \ar[r]^i & C \ar[r]^\alpha \ar[d]^{f(l)} & D \ar[d]^{g(l)} \\
					A \ar[r]^i & C \ar[r]^\alpha & D }
\end{xy}\] 
Since $\alpha\circ f(l) \circ i= g(l) \circ \alpha \circ i=0$, there exists a unique morphism $\begin{xy}\xymatrix{A \ar[r]^{h(l)} & A}\end{xy}$ making the diagram commutative. This defines $h$ and makes $i$ a morphism in $\kc_L$. The axiom about the equality of the image and the coimage is equally easy to check.
\end{proof}

Let us now consider base change for functors.

\begin{definition}
Let $F\colon \ka \rightarrow \kb$ be a functor between $K$-linear abelian (or additive) categories. The functor $F_L\colon \ka_L \rightarrow \kb_L$ is defined as follows: For an $L$-module $(A,f) \in \ka_L$ define a module structure $\widetilde{f}$ on $F(A)$ by the composition $L \rightarrow \End_\ka(A) \rightarrow \End_\kb(F(A))$, i.e.\ $\widetilde{f}(l)=F(f(l))$. For any $\alpha\colon (A,f) \rightarrow (A',g)$ the map $F(\alpha)$ is then compatible with the module structures on $F(A)$ and $F(A')$ and this defines $F$ on morphisms.  
\end{definition}
 
Note that with this definition $F_L$ is exact if $F$ is. Furthermore one has

\begin{lem}
If $F$ is an equivalence, then $F_L$ is also an equivalence.
\end{lem}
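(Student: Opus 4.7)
The plan is to construct a quasi-inverse to $F_L$ from a quasi-inverse to $F$ and upgrade the natural transformations via naturality. Let $G\colon \kb \to \ka$ be a $K$-linear quasi-inverse to $F$, together with natural isomorphisms $\eta\colon \id_\ka \isomor G \circ F$ and $\varepsilon\colon F \circ G \isomor \id_\kb$. I will check that $G_L$ (defined in the same way as $F_L$) is a quasi-inverse to $F_L$, with unit and counit obtained by restricting $\eta$ and $\varepsilon$.

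The first step is essentially formal: on objects one computes $G_L\circ F_L(A,f)=(GF(A),\widetilde{\widetilde f}\,)$ where $\widetilde{\widetilde f}(l)=GF(f(l))$, and similarly $F_L\circ G_L(B,g)=(FG(B),\widetilde{\widetilde g}\,)$. So it suffices to show that $\eta_A\colon A\to GF(A)$ and $\varepsilon_B\colon FG(B)\to B$ are morphisms in the base-changed categories, and then that these define natural isomorphisms $\id_{\ka_L}\Rightarrow G_L\circ F_L$ and $F_L\circ G_L\Rightarrow \id_{\kb_L}$.

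For compatibility of $\eta_A$ with the module structures we need, for every $l\in L$, the equality $\eta_A\circ f(l)=GF(f(l))\circ \eta_A$. But this is precisely the naturality square for $\eta$ applied to the endomorphism $f(l)\colon A\to A$, hence holds tautologically. The analogous argument for $\varepsilon_B$ uses naturality of $\varepsilon$ with respect to $g(l)\colon B\to B$. Since $\eta_A$ is an isomorphism in $\ka$, its inverse is likewise compatible (apply naturality to $\eta_A^{-1}$), so $\eta_A$ is in fact an isomorphism in $\ka_L$; similarly for $\varepsilon_B$.

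It remains to verify that the collections $\{\eta_A\}$ and $\{\varepsilon_B\}$ are natural with respect to morphisms in $\ka_L$ and $\kb_L$. Since a morphism in $\ka_L$ is, after forgetting the $L$-structure, just a morphism in $\ka$, and $\eta$ is natural in $\ka$, the required naturality square in $\ka_L$ is the same diagram as in $\ka$ and commutes automatically; the same holds for $\varepsilon$. This gives natural isomorphisms exhibiting $G_L$ as a quasi-inverse of $F_L$, and completes the proof. The only step that requires any thought is recognising that the compatibility condition defining morphisms of $\ka_L$ is precisely naturality of $\eta$ at $f(l)$, and once that is observed nothing else is obstructed.
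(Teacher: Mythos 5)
Your proof is correct, but it takes a different (and in one respect more careful) route than the paper. The paper verifies directly that $F_L$ is faithful, full, and essentially surjective: faithfulness via the inclusion of $\Hom$-sets, fullness by lifting a compatible $\beta$ to some $\alpha$ with $F(\alpha)=\beta$ and using faithfulness of $F$ to check that $\alpha$ respects the module structures, and essential surjectivity by transporting the module structure along ``the inverse functor $F^{-1}$''. You instead build an explicit quasi-inverse $G_L$ and check that the unit and counit descend, the key observation being that compatibility of $\eta_A$ with the $L$-actions is literally the naturality square of $\eta$ at the endomorphism $f(l)$. That observation is exactly right (and the same mechanism is what makes the paper's fullness argument work, just packaged differently). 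Your version has the advantage of handling essential surjectivity honestly: the paper's phrasing $F_L((A,f))=(B,h)$ presumes a strict inverse, whereas in general one only gets $F_L(G_L(B,h))\cong (B,h)$ and one must check that the comparison isomorphism $\varepsilon_B$ is a morphism in $\kb_L$ --- which is precisely what you do. The only point worth making explicit is that a quasi-inverse of a $K$-linear equivalence can be (and automatically is, up to the usual identifications) $K$-linear, so that $G_L$ is defined; this is standard and not a gap.
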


\begin{proof}
Let $(A,f)$ and $(A',g)$ be objects in $\ka_L$. We have a commutative diagram
\[
\begin{xy}
 \xymatrix{ \Hom((A,f), (A',g)) \ar[d] \ar[rr]^{F_L} & &\Hom((F(A),\widetilde{f}),(F(A'),\widetilde{g})) \ar[d] \\
		\Hom(A,A') \ar[rr]^F_\cong & & \Hom(F(A),F(A'))}	
\end{xy}
\]
where the vertical maps are the inclusions. We conclude that the functor $F_L$ is faithful. Let $\beta\colon F(A) \rightarrow F(A')$ be compatible with the module structures. Since $F$ is full, there exists an $\alpha$ such that $F(\alpha)=\beta$ and we have 
\[F(\alpha\circ f(l))=F(\alpha)\circ F(f(l))=F(\alpha)\circ \widetilde{f}(l)=\widetilde{g}(l)\circ F(\alpha)=F(g(l)\circ \alpha)\]
for any $l \in L$. Since $F$ is faithful, this shows $\alpha\circ f(l)=g(l)\circ \alpha$. We conclude that $\alpha$ is a morphism in $\ka_L$ and $F_L$ is full.\newline
Finally, let $(B,h)$ be an element in $\kb_L$. Using the inverse functor $F^{-1}$ we get an object $(A,f) \in \ka_L$ such that $F_L((A,f))=(B,h)$ and thus $F_L$ is essentially surjective. 
\end{proof}

\begin{ex}\label{affinecase}
Let $A$ be a $K$-algebra and let $\kc=\text{Mod}(A)$ be the abelian category of (left) $A$-modules. As one would expect, one has an equivalence 
\[\Phi\colon \text{Mod}(A)_L \cong \text{Mod}(A\otimes_K L).\]
The definition of the functor is straightforward: If $(M,f)$ is an element in $\text{Mod}(A)_L$, then one can define an $(A\otimes_K L)$-module structure on $M$ as follows:
\[ (A\otimes_K L) \times M \rightarrow M, \hspace{0.7cm} (a\otimes l, m) \mapsto f(l)(am)\]
A morphism $\alpha\colon (M,f) \rightarrow (N,g)$ is simply sent to itself, since the compatibility with the $L$-module structures translates into linearity over $A\otimes_K L$. It is obvious that $\Phi$ is faithful. It is full since for any $(A\otimes_K L)$-linear map $\varphi\colon M \rightarrow N$ one has
\[\varphi f(l)(am)=\varphi((a\otimes l) m)=(a\otimes l) \varphi(m)=g(l)(a\varphi(m))=g(l)\varphi(am)\]
and therefore $\varphi$ can be considered as a map from $(M,f)$ to $(N,g)$. Finally, the functor is essentially surjective since any $(A\otimes_K L)$-module $M$ can be considered as an $A$-module and the $L$-module structure is given by
\[L \rightarrow \End_A(M), \hspace{0.7cm} l \mapsto \left[ f(l)\colon m \mapsto \mu(1\otimes l)m \right],\]
where $\mu$ is the scalar multiplication. 

Using similar arguments one also proves $\text{QCoh}(X)_L \cong \text{QCoh}(X_L)$ for any scheme $X$ over $K$.

Note that the same argument shows that for a finite field extension the base change of the abelian category of all finitely generated $A$-modules is equivalent to the category of all finitely generated $(A\otimes_K L)$-modules. It follows that for a noetherian scheme $X$ over $K$ one has an equivalence $\text{Coh}(X)_L \cong \text{Coh}(X_L)$.
\end{ex}

The group $\Aut(L/K)$ acts on $\kc_L$ in the following way: Let $\alpha \in \Aut(L/K)$ and $(A,f) \in \kc_L$, then $\alpha(A,f):=(A, f\circ \alpha)$. If $\kc$ is equal to $\text{Mod}(A)$ for a $K$-algebra $A$, then it is easy to see that this action corresponds to the usual action of $\Aut(L/K)$ on modules. A nice property one has in this situation is

\begin{lem}\label{gal-descent}
Let $L/K$ be a finite Galois extension with Galois group $G$, let $\kc$ be a $K$-linear abelian category and $\kc_L$ the base change category. Then $G$ acts on $\kc_L$ and Galois descent holds, i.e.\ one has an equivalence between $\kc$ and the category $(\kc_L)^G$ of objects with Galois-action in $\kc_L$ together with Galois-equivariant morphisms.
\end{lem}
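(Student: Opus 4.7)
The plan is to exhibit mutually quasi-inverse functors, following the classical Galois descent recipe. Define $\Phi\colon \kc \rightarrow (\kc_L)^G$ by sending $C$ to $C\otimes_K L$; the latter makes sense in any $K$-linear abelian category as the finite direct sum $\bigoplus_{i=1}^n C$ associated to a $K$-basis $l_1,\dots,l_n$ of $L$. The $L$-module structure $f\colon L\rightarrow\End_\kc(C\otimes_K L)$ is induced by left multiplication on the $L$-factor and the $G$-action by the natural action of $G$ on $L$; Galois equivariance is automatic from $\sigma(ll')=\sigma(l)\cdot\sigma(l')$. Conversely, define $\Psi\colon (\kc_L)^G\rightarrow\kc$ by $(D,f,\rho)\mapsto D^G$, the equalizer in $\kc$ of $\{\rho(\sigma)\}_{\sigma\in G}$ with $\id_D$; this exists since $\kc$ is abelian and $G$ is finite.

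The easy direction is $\Psi\Phi\cong\id_\kc$: one has $(C\otimes_K L)^G\cong C\otimes_K L^G = C\otimes_K K = C$, using that $G$-invariants (a finite limit) commute with tensoring by a finite-dimensional $K$-vector space (a finite direct sum), together with the Galois-theoretic identity $L^G=K$.

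The substance lies in $\Phi\Psi\cong\id_{(\kc_L)^G}$. The candidate isomorphism is the natural $L$-linear and $G$-equivariant map $m\colon D^G\otimes_K L\rightarrow D$ induced by the $L$-action on $D$; explicitly, $m$ restricted to the $i$-th summand of $\bigoplus_{i=1}^n D^G$ is $f(l_i)$ composed with the inclusion $D^G\rightarrow D$. To show $m$ is an isomorphism I would reduce to the classical case via Yoneda: for any $E\in\kc$, applying $\Hom_\kc(E,-)$ to $m$ yields
\[ \Hom_\kc(E,D)^G\otimes_K L \longrightarrow \Hom_\kc(E,D), \]
after identifying the left side using that $\Hom_\kc(E,-)$ preserves finite limits (so commutes with $(-)^G$) and finite direct sums (so commutes with $(-)\otimes_K L$). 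This displayed map is exactly the canonical map of classical Galois descent applied to the $L$-vector space $\Hom_\kc(E,D)$ with its induced semilinear $G$-action, and is therefore an isomorphism. Since $E$ is arbitrary, $m$ is an isomorphism in $\kc$, hence also in $(\kc_L)^G$.

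The main obstacle is this last step, which the Yoneda reduction bypasses cleanly rather than forcing one to set up descent theory inside an abstract $K$-linear abelian category. The remaining verifications—functoriality of $\Phi$ and $\Psi$ and naturality of the unit and counit with respect to the $L$-linear and $G$-equivariant structures—are routine.
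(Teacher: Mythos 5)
Your proof is correct, but it takes a genuinely different route from the paper. The paper invokes the Freyd--Mitchell embedding theorem to realise $\kc$ as a full exact subcategory of $\Mo(A)$ for some $K$-algebra $A$, identifies $\kc_L$ inside $\Mo(A\otimes_K L)$ via Example \ref{affinecase}, quotes classical Galois descent for this pair of module categories, and then argues that the descended module $N$ (with $M\cong N^{d}$) actually lies in $\kc$ because the embedding is full and exact. You instead build the adjoint pair $\Phi=(-)\otimes_K L$ and $\Psi=(-)^G$ intrinsically, dispose of $\Psi\Phi\cong\id$ by exactness of $(-)\otimes_K C$ together with $L^G=K$, and reduce the hard counit isomorphism $D^G\otimes_K L\to D$ to Speiser's theorem for (possibly infinite-dimensional) $L$-vector spaces with semilinear $G$-action by applying $\Hom_\kc(E,-)$, which commutes with the finite limit $(-)^G$ and the finite direct sum $(-)\otimes_K L$. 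What your approach buys: it avoids the smallness hypothesis implicitly needed for Mitchell's theorem, it only requires descent at the level of $K$-vector spaces rather than $A$-modules, and it produces explicit quasi-inverse functors (which is what is actually used later, e.g.\ in Proposition \ref{dim-ab-bc}). What the paper's approach buys is brevity, since all the work is delegated to the classical affine statement. The only points you should nail down are conventions: with the paper's definition $\sigma(A,f)=(A,f\circ\sigma)$, the equivariant structure on $C\otimes_K L$ is $\id_C\otimes\sigma^{-1}$ rather than $\id_C\otimes\sigma$ (the semilinearity condition reads $\rho_\sigma\circ f(l)=f(\sigma^{-1}(l))\circ\rho_\sigma$), and correspondingly the induced action on $\Hom_\kc(E,D)$ is semilinear for the inverse automorphism; neither affects the validity of the descent isomorphism you quote.
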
 

\begin{proof}
By the Mitchell embedding theorem \cite{Mitchell} there exists a full exact embedding of $\kc$ into the abelian category $\text{Mod}(A)$ of modules over some $K$-algebra $A$. By Example \ref{affinecase} the category $\kc_L$ can then be embedded into $\text{Mod}(A \otimes L)$. It is classical that the pair $\text{Mod}(A)$ and $\text{Mod}(A\otimes L)$ satisfies Galois descent. Let $(M,f)$ be an object with Galois-action in $\kc_L$. By Galois descent there exists an $A$-module $N$ such that $N\otimes_A L$ is isomorphic to $(M,f)$. Considering these modules as modules over $A$ gives an isomorphism $M \cong N^d$. Since the embedding of $\kc$ into $\text{Mod}(A)$ is full and exact, this implies that $N$ is indeed an element in $\kc$ (e.g.\ because it can be written as a kernel of an endomorphism of $M$). Hence, the pair $\kc$ and $\kc_L$ satisfies Galois descent as claimed.
\end{proof}

\begin{lem}\label{bc-kom}
Let $\kc$ be a $K$-linear additive category. Then we have an equivalence: 
\[\begin{xy}
\xymatrix{\Phi\colon Kom(\kc)_L \ar[r]^\cong & Kom(\kc_L),}
\end{xy}\]
where $Kom$ denotes the category of complexes.
\end{lem}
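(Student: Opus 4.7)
The plan is to write down the functor $\Phi$ explicitly on objects and morphisms and then verify it is an equivalence by reorganising data; no nontrivial construction should be needed, since the two sides simply encode the same information in different orders.

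First, I would unpack both sides. An object of $\Kom(\kc)_L$ is a pair $(C^\bullet,f)$ consisting of a complex $C^\bullet=(C^n,d^n)$ in $\kc$ and a $K$-algebra map $f\colon L\to\End_{\Kom(\kc)}(C^\bullet)$. For each $l\in L$ the chain map $f(l)$ is a collection of morphisms $f(l)^n\colon C^n\to C^n$ satisfying $d^n\circ f(l)^n=f(l)^{n+1}\circ d^n$. On the other side, an object of $\Kom(\kc_L)$ is a sequence of objects $(C^n,f_n)\in\kc_L$ together with differentials $d^n\colon(C^n,f_n)\to(C^{n+1},f_{n+1})$ in $\kc_L$, i.e.\ morphisms in $\kc$ intertwining $f_n(l)$ and $f_{n+1}(l)$. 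Define $\Phi(C^\bullet,f)$ to be the complex $\{(C^n,f_n),d^n\}$ with $f_n(l):=f(l)^n$; the chain-map condition on $f(l)$ says exactly that each $d^n$ is a morphism in $\kc_L$, and the fact that $f$ is a $K$-algebra map translates degreewise into the statement that each $f_n$ is. Morphisms are handled similarly: a chain map $\alpha\colon C^\bullet\to D^\bullet$ that commutes with $f(l)$ and $g(l)$ is the same datum as a family $\alpha^n\colon(C^n,f_n)\to(D^n,g_n)$ of morphisms in $\kc_L$ compatible with the differentials, so $\Phi$ is defined on $\Hom$-sets.

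Next I would check faithfulness and fullness. Both are immediate: on the level of underlying $\kc$-morphisms $\Phi$ does nothing, and a degreewise $L$-equivariant chain map $\alpha$ is precisely a chain map with $\alpha\circ f(l)=g(l)\circ\alpha$ at the level of the whole complex. Essential surjectivity goes the other way: given a complex $\{(E^n,h_n),\delta^n\}$ in $\Kom(\kc_L)$, forget the $L$-module structures to obtain a complex $E^\bullet$ in $\kc$, and define $h\colon L\to\End_{\Kom(\kc)}(E^\bullet)$ by $h(l)^n:=h_n(l)$. The fact that each $\delta^n$ lies in $\kc_L$ ensures $h(l)$ is genuinely a chain map, and $K$-algebra additivity and multiplicativity of $h$ follow from the corresponding properties of each $h_n$; then $\Phi(E^\bullet,h)$ equals the original object on the nose.

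There is essentially no obstacle; the only thing to keep in mind is the tautological observation that $\End_{\Kom(\kc)}(C^\bullet)$ sits inside $\prod_n\End_\kc(C^n)$ as the subalgebra of families commuting with the differentials, so specifying $f$ is the same as specifying a compatible family of $f_n$'s, which is exactly the matching between the two descriptions.
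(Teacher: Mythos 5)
Your proposal is correct and follows exactly the same route as the paper: define $\Phi$ by reading the componentwise $L$-actions $f(l)^n$ as module structures on each $C^n$, observe that the chain-map condition makes the differentials morphisms in $\kc_L$, and note that the inverse reorganisation gives essential surjectivity. You simply spell out the faithfulness, fullness, and surjectivity checks that the paper dismisses as obvious.
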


\begin{proof}
Let $(A^\bullet= \ldots \rightarrow A^i \rightarrow A^{i+1} \rightarrow \ldots, f)$ be an object in $Kom(\kc)_L$ so that for any $l \in L$ one has a morphism of complexes $f(l)\colon A^\bullet \rightarrow A^\bullet$. For any $n \in \IZ$ the component $f(l)^n$ defines an $L$-module structure on $A^n$ and the differentials are compatible with these structures, hence are morphisms in $\kc_L$. Therefore, $A^\bullet \in Kom(\kc_L)$ and $F$ is defined on objects. A morphism $\alpha\colon (A^\bullet, f) \rightarrow (B^\bullet, g)$ is simply sent to $\alpha$ considered as a morphism of complexes in $Kom(\kc_L)$. It is now obvious that $F$ is an equivalence.  
\end{proof}

There is a forgetful functor $\Lambda\colon \kc_L \rightarrow \kc$ from the base change category to the original one, which is exact in the abelian case. It is also possible to define a functor in the other direction as follows:

If $\kc$ is a $K$-linear additive category, $V$ a $K$-vector space and $X \in \kc$ one can consider the functor
\[F^X_V\colon \kc \rightarrow Vec_K, \;\;\;\;\; C \mapsto \Hom_K(V, \Hom_\kc(X,C)).\]
This functor is representable by the object $X^{\oplus \dim_K(V)}$ which will, for obvious reasons, be denoted by $V\otimes_K X$. Here we tacitly assume that either the field extension is finite or that $\kc$ has arbitrary direct sums. Using the defining property of $V\otimes_K X$ one has an isomorphism
\[\mu\colon F^X_V(V\otimes_K X)=\Hom_K(V, \Hom_\kc(X,V\otimes_K X))\cong\End(V\otimes_K X).\]
Let us now specialize to $V=L$, where $L$ is our finite field extension. We can define an $L$-module structure on $L\otimes_K X$ as follows. Consider the element $f_0=\mu^{-1}(\id) \in \Hom_K(L, \Hom_\kc(X,L\otimes_K X))$. Any element $l \in L$ gives a $K$-linear map from $L$ to itself and therefore we can define $\alpha(l)$ to be $\mu(f_0 \circ l)$. It is easy to check that this defines a homomorphism of algebras $\alpha\colon L \rightarrow \End(L\otimes_K X)$ and thus an $L$-module structure on $L\otimes_K X$. One could equally well just use the following

\begin{lem}
Let $\ka$ be an additive category with arbitrary direct sums. There exist canonical maps ${\rm{Mat}}(I\times J, K) \rightarrow \Hom_\ka(\oplus_I X, \oplus_J X)$, where $I$ and $J$ are some index sets, which are compatible with the inclusions and projections. Via these maps, matrix multiplication corresponds to composition of maps.
\end{lem}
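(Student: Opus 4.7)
The plan is to build the map $\Phi\colon \operatorname{Mat}(I\times J,K)\to\Hom_\ka(\oplus_I X,\oplus_J X)$ by hand from the two ingredients available: the $K$-linear structure on $\ka$ (which turns each scalar $m_{ij}$ into an endomorphism $m_{ij}\cdot\id_X$ of $X$) and the universal property of the coproduct $\oplus_I X$.

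First I would fix $i\in I$ and use the $i$-th row of $M=(m_{ij})$ to produce a single morphism $\varphi_i\colon X\to\oplus_J X$ defined by the finite sum $\varphi_i=\sum_{j}\iota_j\circ(m_{ij}\cdot\id_X)$, where $\iota_j\colon X\to\oplus_J X$ is the canonical inclusion. For this to make sense I would take $\operatorname{Mat}(I\times J,K)$ to consist of row-finite matrices; this is harmless because in the intended applications the index sets are anyway finite. The universal property of $\oplus_I X$ then assembles the family $(\varphi_i)_{i\in I}$ into a unique morphism $\Phi(M)\colon \oplus_I X\to\oplus_J X$ characterised by $\Phi(M)\circ\iota_i=\varphi_i$ for every $i\in I$.

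Compatibility with the inclusions is built into the construction. For the projections I would invoke the canonical morphisms $\pi_j\colon \oplus_J X\to X$ which exist in any additive category with coproducts and are uniquely determined by $\pi_j\circ\iota_{j'}=\delta_{jj'}\id_X$; a direct computation then yields $\pi_j\circ\Phi(M)\circ\iota_i=m_{ij}\cdot\id_X$, so every matrix entry can be recovered from $\Phi(M)$. The identity $\Phi(NM)=\Phi(N)\circ\Phi(M)$ follows in the same spirit by expanding $\pi_k\circ\Phi(N)\circ\Phi(M)\circ\iota_i$ via the formula above and using $K$-bilinearity of composition to arrive at the usual entry $\sum_j n_{jk}m_{ij}$ of the matrix product.

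The main obstacle is essentially bookkeeping: fixing a convention that keeps every sum appearing finite (row-finite matrices, as above) and checking that $\Phi(M)$ is independent of auxiliary choices such as the specific coproduct diagram picked for $\oplus_I X$ and $\oplus_J X$. Past this point the argument is purely formal and rests only on the universal property of the coproduct together with the $K$-linearity of composition in $\ka$.
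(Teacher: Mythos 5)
Your construction is correct, but it is worth pointing out that the paper does not actually prove this lemma: it simply cites \cite[Lem.\ B3.3]{AZ} and remarks that the argument there only uses additivity. What you have written is, in effect, the standard proof of that cited lemma, carried out from scratch: define $\Phi(M)$ on each summand via the universal property of the coproduct, using the $K$-linear structure to turn each entry $m_{ij}$ into $m_{ij}\cdot\id_X$, and then verify compatibility with inclusions and projections and multiplicativity by expanding $\pi_k\circ\Phi(N)\circ\Phi(M)\circ\iota_i$. This is a perfectly valid and self-contained route, and arguably more informative than the citation. Two small points deserve attention. First, your restriction to row-finite matrices is not a blemish but a necessary correction to the statement as literally written: for infinite $I,J$ an arbitrary matrix does not define a morphism, since the sum $\sum_j\iota_j\circ(m_{ij}\cdot\id_X)$ must be finite; in the paper's only application ($I=J$ a basis of a finite extension $L/K$) everything is finite, so nothing is lost. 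Second, the entry $\sum_j n_{jk}m_{ij}$ you obtain for the composite is the $(i,k)$-entry of $MN$ in the usual convention, so composition corresponds to matrix multiplication in the opposite order (or to multiplication of transposes); this is a harmless convention issue that the paper's statement also glosses over, but you should fix one convention and state it.
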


\begin{proof}
This is a special case of \cite[Lem.\ B3.3]{AZ}. Note that in \cite{AZ} the authors work with abelian categories, but the quoted lemma only needs the additivity.
\end{proof}

Mapping $X$ to $X\otimes_K L$ defines an exact $K$-linear functor 
\[\Xi\colon \kc \rightarrow \kc_L\]
by sending an exact sequence $X \rightarrow Y \rightarrow Z$ to its $\dim_K(V)$-fold sum. One has the 

\begin{lem}
The functor $\Xi$ is left adjoint to $\Lambda$, i.e.\ for objects $C \in \kc$ and $(D,\alpha) \in \kc_L$ one has a natural isomorphism
\[ \begin{xy} \xymatrix{\Hom_{\kc_L}(\Xi(C), (D,\alpha)) \ar[r]^\cong & \Hom_\kc(C, \Lambda(D,\alpha)).} \end{xy}\]
\end{lem}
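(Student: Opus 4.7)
The plan is to exhibit the bijection by first ignoring the $L$-module structure and applying the defining property of $L\otimes_K C$, and then showing that the $L$-linearity of a morphism in $\kc_L$ pins down the corresponding element of $\Hom_K(L,\Hom_\kc(C,D))$ to a single value.

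First I would unwind the representing property. Since $L\otimes_K C$ represents the functor $F^C_L$, there is a natural $K$-linear isomorphism
\[
\Psi\colon \Hom_\kc(L\otimes_K C, D)\longrightarrow \Hom_K(L,\Hom_\kc(C,D)),
\]
and under $\Psi$ the identity of $L\otimes_K C$ corresponds to the distinguished element $f_0\in \Hom_K(L,\Hom_\kc(C,L\otimes_K C))$ used in the definition of the canonical $L$-structure on $L\otimes_K C$. In particular, if $\beta\colon L\otimes_K C\to D$ corresponds to $\phi=\Psi(\beta)$, then for every $l\in L$ one has $\phi(l)=\beta\circ f_0(l)$.

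Next I would translate the compatibility with the $L$-action into a condition on $\phi$. By the very definition of the canonical module structure, multiplication by $l\in L$ on $L\otimes_K C$ is $\mu(f_0\circ l)$, and a straightforward computation (applying $\Psi$ to the equation $\beta\circ(l\cdot\id)=\alpha(l)\circ\beta$) reduces the $L$-linearity of $\beta$ to the identity
\[
\phi(l)=\alpha(l)\circ \phi(1)\quad\text{for all }l\in L.
\]
Thus an $L$-equivariant $\beta$ is uniquely determined by $\phi(1)\in\Hom_\kc(C,\Lambda(D,\alpha))$, and conversely any $\gamma\in\Hom_\kc(C,D)$ extends to a $K$-linear $\phi$ via $\phi(l):=\alpha(l)\circ\gamma$, which lies in the image of $\Psi$ and satisfies the compatibility. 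Evaluation at $1$ therefore gives the desired bijection, and its naturality in $C$ and $(D,\alpha)$ is inherited from the naturality of $\Psi$.

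The main obstacle is the second step: one has to make sure that the abstractly defined $L$-action on $L\otimes_K C$ is concrete enough to check that $L$-equivariance really translates into the relation $\phi(l)=\alpha(l)\circ\phi(1)$. Once this is pinned down, both directions of the bijection are manifest and no further calculation is needed.
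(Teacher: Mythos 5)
Your proposal is correct and follows essentially the same route as the paper: the paper's proof also constructs the inverse by sending $f\in\Hom_\kc(C,D)$ to the $K$-linear map $l\mapsto\alpha(l)\circ f$ and invoking the representing property of $L\otimes_K C$ to convert this into an $L$-equivariant morphism $\Xi(C)\to D$. You merely spell out in more detail the verification that $L$-equivariance of $\beta$ is equivalent to the relation $\phi(l)=\alpha(l)\circ\phi(1)$, which the paper leaves implicit.
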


\begin{proof}
We recall the proof from \cite{Stalder} where an inverse is  constructed as follows: Let $f$ be an element in $\Hom_\kc(C, D)$. Using $\alpha$ one defines a morphism $L \rightarrow \Hom(C,D)$ by $l \mapsto \alpha(l)\circ f$. By definition of the tensor product this corresponds to a morphism $\Xi(C) \rightarrow D$ which is compatible with the $L$-module structures. One could also just quote \cite[Prop.\ B3.16]{AZ}. Note that in \cite{AZ} the authors define tensor products in a more general setting and therefore abelian categories have to be used for some of the arguments. In our situation the additivity is in fact sufficient for the quoted statement.
\end{proof}

\begin{ex}
Consider the situation of Example \ref{affinecase}. It is easy to see that the functor $\Xi$ corresponds to tensoring an $A$-module with the ring $A\otimes_K L$ and the functor $\Lambda$ is nothing but considering a module over $A\otimes_K L$ as an $A$-module. Going from the affine situation to an arbitrary scheme $X$ over $K$ we see that $\Xi$ corresponds to $p^*$ and $\Lambda$ to $p_*$, where $p\colon X_L \rightarrow X$ is the projection. Of course, $p^*$ is exact, since $p$ is flat. Thus, the above lemma translates into the usual adjunction of the functors $p^*$ and $p_*$.  
\end{ex}

\noindent
{\bf{Convention:}} From here on we will write $p^*$ for the functor $\Xi$ and $p_*$ for $\Lambda$. 

\begin{cor}\label{inj}
If $(C,f)$ is an injective object in $\kc_L$, then $C=p_*(C,f)$ is an injective object in $\kc$. Furthermore, if $\kc_L$ has enough injective objects, then the same holds for $\kc$.
\end{cor}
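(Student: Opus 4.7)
The plan is to exploit the adjunction $p^* \dashv p_*$ together with the exactness of $p^*$ established earlier. For the first assertion, I would invoke the familiar principle that a right adjoint to an exact functor preserves injective objects. Concretely, given a monomorphism $A \hookrightarrow B$ in $\kc$ and a morphism $\varphi \colon A \to p_*(C,f) = C$, I would apply the exact functor $p^*$ to obtain a monomorphism $p^*A \hookrightarrow p^*B$ in $\kc_L$, transfer $\varphi$ across the adjunction to a morphism $p^*A \to (C,f)$, use the injectivity of $(C,f)$ in $\kc_L$ to extend it to $p^*B \to (C,f)$, and then transfer the extension back across the adjunction to obtain the required morphism $B \to C$.

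For the second assertion, suppose $\kc_L$ has enough injectives, and let $C \in \kc$ be arbitrary. I would choose a monomorphism $p^*C \hookrightarrow J$ into an injective object $J \in \kc_L$. Applying $p_*$ (which is left exact as a right adjoint) yields a monomorphism $p_*p^*C \hookrightarrow p_*J$, and the first part of the corollary guarantees that $p_*J$ is injective in $\kc$. It then suffices to produce a monomorphism $C \hookrightarrow p_*p^*C$. Since $p^*C = L \otimes_K C$ has underlying object in $\kc$ a $\dim_K(L)$-fold direct sum of $C$, I would identify the unit of adjunction $C \to p_*p^*C$ with the inclusion of one summand; this makes it a split monomorphism, and composition with $p_*p^*C \hookrightarrow p_*J$ completes the argument.

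The only step requiring real care is the identification of the unit $C \to p_*p^*C$ with the inclusion of a direct summand. This is not completely formal in a purely additive setting, but it follows from the construction of $L \otimes_K C$ as a representing object for the functor $F^C_L$, combined with the matrix description of morphisms between direct sums recalled in the lemma attributed to \cite{AZ}: under that description, the unit corresponds to the column of identities and projections that realises $C$ as a summand of $C^{\oplus \dim_K L}$. Once this compatibility is in place, the rest of the proof is a straightforward diagram chase through the adjunction.
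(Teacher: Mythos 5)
Your argument is correct and is essentially the paper's own proof: the first part is the standard ``right adjoint of an exact functor preserves injectives'' argument (which the paper phrases as the isomorphism of functors $\Hom_\kc(-,C)\cong\Hom_{\kc_L}(p^*(-),(C,f))$), and the second part embeds $C$ as a summand of $p_*p^*C\cong C^{\oplus d}$ and composes with $p_*$ applied to an embedding of $p^*C$ into an injective. Your worry about identifying the unit with a summand inclusion is unnecessary: any split monomorphism $C\rightarrow C^{\oplus d}$ composed with the monomorphism $C^{\oplus d}\rightarrow p_*J$ does the job, which is exactly how the paper phrases it.
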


\begin{proof}
The lemma above gives that the functors $\Hom(-,C)$ and $\Hom(p^*(-),(C,f))$ are isomorphic. The latter is exact, being the composition of the exact functors $p^*$ and $\Hom(-, (C,f))$. This proves the first statement. As to the second one: Consider an arbitrary element $C \in \kc$. The object $p^*(C)$ can, by assumption, be embedded into an injective object $(D,g)$. Applying the exact functor $p_*$ to this embedding we get an injection $C^{\oplus d} \rightarrow D$. Thus, $C$ can be embedded into the injective object $D$.  
\end{proof}

\begin{remark}
In fact, the converse implication of the second statement also holds, cf. \cite[Prop.\ 4.8]{LvdB}. 
\end{remark}

\begin{cor}
There is a fully faithful functor $I(\kc_L) \rightarrow I(\kc)_L$ sending $(I,f)$ to $(I,f)$ (where $I(\kc)$ resp.\ $I(\kc_L)$ denotes the category of injective objects in $\kc$ resp.\ $\kc_L$). Furthermore, $I(\kc_L)$ is closed under direct summands in $I(\kc)_L$.
\end{cor}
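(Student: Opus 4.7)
The plan is to unpack the two subcategories involved and observe that most of the work has already been done by Corollary \ref{inj}. An object of $I(\kc_L)$ is a pair $(I,f)$ with $I\in\kc$ and $f\colon L\to\End_\kc(I)$ such that $(I,f)$ is injective in $\kc_L$. An object of $I(\kc)_L$ is a pair $(J,g)$ with $J$ injective in $\kc$ and $g\colon L\to\End_\kc(J)$. So the very first step is to check that the assignment $(I,f)\mapsto (I,f)$ even lands in $I(\kc)_L$: this is immediate from Corollary \ref{inj}, which says that if $(I,f)$ is injective in $\kc_L$, then $I=p_*(I,f)$ is injective in $\kc$; the datum $f$ is unchanged.

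For the fully faithful part, I would just compare hom sets. Since $I(\kc)$ is a full subcategory of $\kc$, a morphism $(I,f)\to(J,g)$ in $I(\kc)_L$ is by definition the same thing as a morphism $\alpha\colon I\to J$ in $\kc$ compatible with the $L$-actions $f$ and $g$; this is literally the definition of a morphism in $\kc_L$ and hence of a morphism in the full subcategory $I(\kc_L)\subset\kc_L$. Thus the map on Hom-sets is the identity, so the functor is fully faithful.

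For the closure under direct summands, suppose $(I,f)\in I(\kc_L)$ and $(I,f)\cong(J,g)\oplus(K,h)$ in $I(\kc)_L$; I need to see that $(J,g)\in I(\kc_L)$. The key observation is that the direct sum in $I(\kc)_L$ is computed by the direct sum in $\kc_L$ (both are $(J\oplus K, g\oplus h)$), and a direct summand decomposition amounts to a pair of projections/inclusions satisfying the usual idempotent relations, which are morphisms in $\kc_L$ by the previous paragraph. Hence $(J,g)$ is a direct summand of $(I,f)$ in $\kc_L$, and since direct summands of injective objects in an abelian category are injective, $(J,g)\in I(\kc_L)$.

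There is no real obstacle here; the only point that requires any input beyond bookkeeping is checking that the functor is well-defined on objects, and that is exactly what Corollary \ref{inj} provides. Everything else is formal from the definitions and the fact that $I(\kc)\subset\kc$ and $I(\kc_L)\subset\kc_L$ are full subcategories closed under direct summands.
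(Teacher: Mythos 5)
Your proposal is correct and follows essentially the same route as the paper: the well-definedness on objects comes from Corollary \ref{inj}, full faithfulness is immediate from the definitions, and the closure under direct summands reduces to the fact that a direct summand of an injective object is injective. The paper's proof simply declares the first statement to need no argument and states the summand fact; you spell out the bookkeeping (identity on Hom-sets, compatibility of direct sums in $I(\kc)_L$ and $\kc_L$) in slightly more detail, but the substance is identical.
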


\begin{proof}
Only the second statement needs a proof. Let $(I,f)$ and $(J,g)$ be two elements in $I(\kc)_L$ such that their direct sum $(I\oplus J, f\oplus g)$ is in $I(\kc_L)$. Now use that a direct summand of an injective object is injective.  
\end{proof}

\section{Differential graded categories}
In this section we recall the necessary notions and facts from the theory of differential graded categories. For details see e.g.\ \cite{Drinfeld}, \cite{Keller} or \cite{LO}.

\begin{definition}
A \emph{differential graded category} or \emph{DG-category} over a field $K$ is a $K$-linear additive category $\ka$ such that for any two objects $X, Y \in \ka$ the space of morphisms $\Hom(X,Y)$ is a complex, the composition of morphisms 
\[\Hom(X,Y)\otimes \Hom(Y,Z) \rightarrow \Hom(X,Z)\]
is a chain map and the identity with respect to the composition is closed of degree $0$.
\end{definition}

\begin{ex}
The most basic example of a $K$-linear DG-category is the category of complexes of $K$-vector spaces. For two complexes $X$ and $Y$ we define $\Hom(X,Y)^n$ to be the $K$-vector space formed by families $\alpha=(\alpha^p)$ of morphisms $\alpha^p\colon X^p \rightarrow Y^{p+n}$, $p \in \IZ$. We define $\Hom_{DG}(X,Y)$ to be the graded $K$-vector space with components $\Hom(X,Y)^n$ and whose differential is given by
\[d(\alpha)=d_Y\circ \alpha -(-1)^n\alpha \circ d_X.\]
The DG-category $C_{DG}(K)$ has as objects complexes and the morphisms are defined by
\[C_{DG}(K)(X,Y)=\Hom_{DG}(X,Y).\]
Of course, starting with the category of complexes over an arbitrary $K$-linear abelian (or additive) category one can associate a DG-category to it in a similar manner.

Clearly, we get back the usual category of complexes by taking as morphisms only the closed morphisms of degree zero and we get the usual homotopy category if we replace $\Hom_{DG}(X,Y)$ by $\ker(d^0)/\im(d^{-1})$. 
\end{ex}

A \emph{DG-functor} $F\colon \ka \rightarrow \kb$ between DG-categories $\ka$ and $\kb$ is by definition required to be compatible with the structure of complexes on the spaces of morphisms. If $F, G\colon \ka \rightarrow \kb$ are two DG-functors, then we define \emph{the complex of graded morphisms} $\Hom(F, G)$ to be the complex whose $n$th component is the space formed by families of morphisms $\phi_X \in \Hom_\kb(F(X),G(X))^n$ such that $(G\alpha)(\phi_X)=(\phi_Y)(F\alpha)$ for all $\alpha \in \Hom_\ka(X,Y)$, where $X,Y \in \ka$. The differential is given by that of $\Hom_\kb(F(X),G(X))$. Using this we define the DG-category of DG-functors from $\ka$ to $\kb$, denoted by $\Hom(\ka,\kb)$, to be the category with DG-functors as objects and the above described spaces as morphisms. Note that the DG-functors between $\ka$ and $\kb$ are precisely the closed morphisms of degree zero in $\Hom(\ka,\kb)$.

To any DG-category $\ka$ one can naturally associate two other categories: Firstly, there is the \emph{graded category} $Ho^\bullet(\ka)=H^\bullet(\ka)$ having the same objects as $\ka$ and where the space of morphisms between two objects $X, Y$ is by definition the direct sum of the cohomologies of the complex $\Hom_\ka(X, Y)$. Secondly, restricting to the cohomology in degree zero we get the \emph{homotopy category} $Ho(\ka)=H^0(\ka)$. 

\begin{definition}\label{quasi}
A DG-functor $F\colon \ka \rightarrow \kb$ is \emph{quasi fully faithful} if for any two objects $X,Y$ in $\ka$ the map 
\[\Hom(X,Y) \rightarrow \Hom(F(X), F(Y))\]
is a quasi-isomorphism and $F$ is a \emph{quasi-equivalence} if in addition the induced functor $H^0(F)$ is essentially surjective. Two DG-categories $\ka$ and $\kb$ are called \emph{quasi-equivalent} if there exist DG-categories $\kc_1,\ldots, \kc_n$ and a chain of quasi-equivalences $\begin{xy}\xymatrix{\ka & \kc_1 \ar[l] \ar[r] & \cdots & \kc_n \ar[l] \ar[r] & \kb.}\end{xy}$

A DG-functor $F\colon \ka \rightarrow \kb$ is a \emph{DG-equivalence} if it is fully faithful and for every object $B \in \kb$ there is a closed isomorphism of degree 0 between $B$ and an object of $F(\ka)$.
\end{definition}

We also have to recall the following construction from \cite{BK}.

\begin{definition}
Let $\ka$ be a DG-category. Define the \emph{pretriangulated hull} $\ka^{pretr}$ of $\ka$ to be the following category. Its objects are formal expressions $(\oplus_{i=1}^n C_i[r_i],q)$, where $C_i \in \ka$, $r_i \in \IZ$, $n\geq 0$, $q=(q_{ij})$, $q_{ij} \in \Hom(C_j, C_i)[r_i-r_j]$ is homogeneous of degree 1, $q_{ij}=0$ for $i \geq j$, $dq+q^2=0$. If $C=(\oplus_{j=1}^n C_j[r_j],q)$ and $C'=(\oplus_{i=1}^m C'_i[r'_i],q')$ are objects in $\ka^{pretr}$, then the $\IZ$-graded $K$-module $\Hom(C,C')$ is the space of matrices $f=(f_{ij})$, $f_{ij} \in \Hom(C_j,C'_i)[r'_i-r_j]$ and the composition map is matrix multiplication. The differential $d\colon \Hom(C,C') \rightarrow \Hom(C,C')$ is defined by $d(f)=(df_{ij})+q'f-(-1)^lfq$ if $\deg f_{ij}=l$. The category $\ka$ is called \emph{pretriangulated} if the natural fully faithful functor $\Psi\colon \ka \rightarrow \ka^{pretr}$ is a quasi-equivalence and $\ka$ is \emph{strongly pretriangulated} if $\Psi$ is a DG-equivalence. 
\end{definition}

The reason for introducing the pretriangulated hull is that its homotopy category is always triangulated. Thus, we have the following

\begin{definition}
Let $\ka$ be a DG-category. The associated triangulated category is $\ka^{tr}:=H^0(\ka^{pretr})$.
\end{definition}


Finally we have the following fundamental notion.

\begin{definition}
Let $\kt$ be a triangulated category. An \emph{enhancement} of $\kt$ is a pair $(\kb,\epsilon)$, where $\kb$ is a pretriangulated DG-category and $\begin{xy}\xymatrix{\epsilon\colon H^0(\kb) \ar[r]^(.6)\sim & \kt}\end{xy}$ is an equivalence of triangulated categories.

The category $\kt$ is said to have a unique enhancement if it has one and for two enhancements $(\kb,\epsilon)$ and $(\kb',\epsilon')$ there exists a quasi-functor (see \cite{LO}) $\phi\colon \kb \rightarrow \kb'$ which induces an equivalence $H^0(\phi)\colon H^0(\ka)\rightarrow H^0(\kb)$. One then calls the two enhancements \emph{equivalent}. Two enhancements are called \emph{strongly equivalent} if there exists a quasi-functor $\phi$ such that $\epsilon'\circ H^0(\phi)$ and $\epsilon$ are isomorphic. 
\end{definition}

A reformulation of the above is the following: Two enhancements are identified if there exists a chain as in Definition \ref{quasi} where all the $\kc_i$ are enhancements as well.

\begin{remark}
According to (the more general) \cite[Thm.\ 9.9]{LO} the category $\Db(X)$ has a strongly unique enhancement if $X$ is a smooth and projective variety.
\end{remark}

\section{Scalar extension via DG-enhancements}

Let $\kt$ be a $K$-linear triangulated category and assume that it admits an enhancement $\ka$.

\begin{definition}
If $X$ is an object of a DG-category $\ka$, then a stucture of an $L$-module on $X$ is given by a morphism $f: L \rightarrow \End_\ka(X)$ of DG-algebras over $K$.
\end{definition}

In particular, the image of $L$ under $f$ lies in the kernel of $d^0$ of $\End_\ka(X)$.\smallskip

We thus have a category $\ka_L$ of $L$-modules. It is easy to prove the

\begin{lem}
For a DG-category $\ka$ over $K$ the category $\ka_L$ has the structure of a DG-category over $L$.
\end{lem}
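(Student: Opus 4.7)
The plan is to mimic the additive-category construction from Section 2, carrying the extra DG-structure along. For a pair of objects $(X,f),(Y,g)\in\ka_L$, I would define the morphism complex
\[
\Hom_{\ka_L}((X,f),(Y,g))\;:=\;\{\,\alpha\in\Hom_\ka(X,Y)\mid g(l)\circ\alpha=\alpha\circ f(l)\ \text{for all }l\in L\,\},
\]
graded by the grading inherited from $\Hom_\ka(X,Y)$, with differential the restriction of the differential in $\ka$. Note that since $f(l)$ has degree $0$ there is no Koszul sign in the compatibility condition, even when $\alpha$ is homogeneous of nonzero degree.

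The first thing to verify is that this graded subspace is actually closed under $d$, so that it really is a subcomplex. This is where one uses in an essential way that $f$ is a morphism of \emph{DG}-algebras over $K$: the elements $f(l)$ and $g(l)$ lie in $\ker(d^0)$ of the respective endomorphism complexes, hence are closed and of degree $0$. Therefore, applying the Leibniz rule,
\[
d(g(l)\circ\alpha)=g(l)\circ d\alpha\qquad\text{and}\qquad d(\alpha\circ f(l))=(d\alpha)\circ f(l),
\]
so the compatibility condition is preserved by $d$. Next I would check that composition restricts: if $\alpha\colon(X,f)\to(Y,g)$ and $\beta\colon(Y,g)\to(Z,h)$ are compatible, then $h(l)\circ(\beta\circ\alpha)=\beta\circ g(l)\circ\alpha=(\beta\circ\alpha)\circ f(l)$; and the identity on $X$ trivially commutes with $f(l)$. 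This gives $\ka_L$ the structure of a $K$-linear DG-category.

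It remains to upgrade the $K$-linear DG-structure to an $L$-linear one. Following the additive case, for $l\in L$ and $\alpha\in\Hom_{\ka_L}((X,f),(Y,g))$ of degree $n$, set
\[
l\cdot\alpha\;:=\;g(l)\circ\alpha\;=\;\alpha\circ f(l),
\]
the two expressions being equal by the very definition of the morphism space. One then checks the routine list: $l\cdot\alpha$ again satisfies the compatibility condition (using associativity and the fact that $g(l)$ and $g(l')$ commute inside $\End_\ka(Y)$ since $g$ factors through a commutative algebra $L$); the assignment $l\mapsto l\cdot(-)$ is additive and multiplicative in $l$; it commutes with $d$ because $f(l)$ is closed of degree $0$, so $d(l\cdot\alpha)=d(g(l)\circ\alpha)=g(l)\circ d\alpha=l\cdot d\alpha$; and it is compatible with composition, since $(l\cdot\beta)\circ\alpha=\beta\circ g(l)\circ\alpha=\beta\circ(l\cdot\alpha)=l\cdot(\beta\circ\alpha)$.

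The main (and essentially only) substantive point is the first verification above: that the compatibility condition cuts out a \emph{subcomplex}, not merely a graded subspace. Everything else is a direct transcription of the additive argument given in the proof of the first lemma of Section~2, with the bookkeeping of the grading and differential added. Since all the needed Koszul signs trivialize (because $f(l)$ sits in degree $0$), no genuine obstacle arises.
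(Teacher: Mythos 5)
Your proof is correct and follows essentially the same route as the paper: the only substantive point is that the compatibility condition defines a subcomplex of $\Hom_\ka(X,Y)$, which you establish exactly as the paper does, by differentiating the relation $\alpha\circ f(l)=g(l)\circ\alpha$ and using that $f(l)$, $g(l)$ are closed of degree $0$ because $f$ and $g$ are morphisms of DG-algebras. Your additional verifications (closure under composition, identities, and the $L$-linear structure via $l\cdot\alpha:=g(l)\circ\alpha=\alpha\circ f(l)$) are the routine points the paper leaves implicit, and they are carried out correctly.
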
  

\begin{proof}
One only needs to check that the space of morphisms between two $L$-modules $(X,f)$ and $(Y,g)$ is a complex in a natural way. For this it is enough to show that for any $\alpha \in \Hom((X,f), (Y,g))$ the map $d(\alpha)$ is again in $\Hom((X,f), (Y,g))$, in other words that the differential in $\Hom(X,Y)$ restricts to the subgroup $\Hom((X,f), (Y,g))$. \newline
We know that $\alpha f(l)=g(l) \alpha$ for any $l \in L$. Differentiating both sides gives
\[d(\alpha)f(l)+\alpha d(f(l))=d(\alpha f(l))=d(g(l) \alpha)=d(g(l))\alpha+g(l)d(\alpha).\]
Since $f$ and $g$ are morphisms of DG-algebras, $d(f(l))=f(d(l))=f(0)=0$ and similarly for $g$. This completes the proof.
\end{proof}
\noindent
{\bf{Convention:}} If $f: L \rightarrow \End_\ka(X)$ and $g: L \rightarrow \End_\ka(Y)$ are two given module structures, we will sometimes write
$\Hom^{f,g}(X,Y)$ for the subcomplex $\Hom((X,f), (Y,g))$ of $\Hom(X,Y)$ defined above.

The next proposition provides a different description of the base change category.

\begin{prop}\label{equiv-def-bc}
Let $1_L$ be the $K$-linear DG-category with one object whose endomorphism ring is $L$. For a $K$-linear DG-category $\ka$ define $\ka'_L$ to be the category $\Hom(1_L, \ka)$. Then there exists an equivalence $\ka'_L \cong \ka_L$.	
\end{prop}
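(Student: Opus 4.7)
The plan is to write down the obvious functor $\Phi\colon \ka'_L \to \ka_L$ and verify that it is a DG-equivalence essentially by unpacking definitions. Since $1_L$ is a DG-category with a single object $\ast$ and $\End_{1_L}(\ast) = L$ concentrated in degree zero with trivial differential, a DG-functor $F\colon 1_L \to \ka$ is completely determined by the single object $X := F(\ast) \in \ka$ together with the DG-algebra map
\[
f := F|_{\End_{1_L}(\ast)}\colon L \to \End_\ka(X).
\]
That is exactly the data of an $L$-module structure $(X,f)$ on $X$ in the sense of the preceding definition, so I would set $\Phi(F) := (X,f)$. Conversely, every $(X,f) \in \ka_L$ clearly arises in this way, so $\Phi$ is a bijection on objects.

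Next I would check that $\Phi$ induces an isomorphism of complexes on each hom space. By the definition of the complex of graded morphisms $\Hom(F,G)$ recalled in Section 3, an element of $\Hom(F,G)^n$ is a single homogeneous morphism $\phi \in \Hom_\ka(F(\ast), G(\ast))^n$ satisfying the naturality condition $G(\alpha)\,\phi = \phi\, F(\alpha)$ for every $\alpha \in \End_{1_L}(\ast)$. Writing $\Phi(F) = (X,f)$ and $\Phi(G) = (Y,g)$, this condition becomes $g(l)\,\phi = \phi\, f(l)$ for every $l \in L$, which is exactly the condition carving out $\Hom^{f,g}(X,Y)^n$ inside $\Hom_\ka(X,Y)^n$. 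The differentials on both sides are each inherited from the differential on $\Hom_\ka(X,Y)$, so $\Phi$ is an isomorphism of complexes on hom spaces and hence a DG-equivalence in the sense of Definition \ref{quasi}.

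The only point where some care is needed is the interplay between the naturality condition in $\Hom(1_L,\ka)$ and the $L$-linearity condition defining $\ka_L$: the former is formally stronger, since it demands equivariance with respect to \emph{all} morphisms of $1_L$, but because every such morphism is an element of $L$ and $1_L$ has a single object, it collapses precisely to the required relation $g(l)\phi = \phi f(l)$. So no real obstacle arises beyond the definition chase; the content of the proposition is the conceptually clean observation that an $L$-module structure on an object of $\ka$ is the same thing as a DG-functor from $1_L$ into $\ka$, which is the expected ``representing object'' reformulation.
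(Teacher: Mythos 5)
Your proof is correct and follows the same route as the paper's: both identify a DG-functor $F\colon 1_L\to\ka$ with the pair $(F(\ast), F|_{\End(\ast)})$ and observe that the complex of graded natural transformations collapses, for a one-object source, to exactly the subcomplex $\Hom^{f,g}(X,Y)$ of $L$-equivariant morphisms. Your writeup is somewhat more detailed than the paper's (which asserts the correspondence on hom spaces in one sentence), but there is no difference in substance.
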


\begin{proof}
Let $F: 1_L \rightarrow \ka$ be a functor. It determines a unique object $X \in \ka$. Furthermore, if $F$ is a DG-functor, we get a homomorphism of DG-algebras $f: L \rightarrow \End(X)$. Thus $F$ corresponds to $(X,f)$, an $L$-module. By definition the natural transformations between two functors $F$ and $G$ correspond precisely to morphisms from $X$ to $Y$ compatible with the module structures which finishes the proof.    
\end{proof}

\begin{remark}
Let $\ka$ be a DG-category and assume that either $L/K$ is finite or that $\ka$ has arbitrary direct sums. In this situation there exists a natural DG-functor $\ka \rightarrow \ka_L$ defined as in Section 2. Using the above description it is given as the functor mapping $A \in \ka$ to the functor sending the unique object of $1_L$ to $A^{\oplus \dim_K(L)}$.  
\end{remark}

\begin{remark}
Note that there is a second possibility to associate to a $K$-linear DG-category $\ka$ an $L$-linear DG-category, namely by taking the tensor product of $\ka$ with the category $1_L$. Recall that the tensor product of two DG-categories $\ka$ and $\kb$ is defined to be the DG-category where the objects are pairs $(A,B)$ and the space of morphisms of two such pairs $(A,B)$ and $(A',B')$ is defined to be the tensor product of complexes $\Hom_\ka(A,A')\otimes \Hom_\kb(B,B')$. However, this cannot be the right construction in the geometric case, since we do not get any new objects. It rather seems that in a sense this construction corresponds to associating to $\text{Coh}(X)$ (for a scheme $X$ over $K$) the category $p^*(\text{Coh}(X))$, where $p: X_L \rightarrow X$ is the projection. 
\end{remark}

\begin{definition}
Let $\kt=H^0(\ka)$ be the homotopy category of a pretriangulated $K$-linear DG-category $\ka$ and let $L/K$ be a field extension. We define the \emph{base change category} $\kt_L$ to be the smallest thick (i.e.\ closed under taking direct summands) full triangulated subcategory of $H^0((\ka_L)^{pretr})$ containing the image of $\kt$ under the functor induced by $\ka \rightarrow \ka_L \hookrightarrow (\ka_L)^{pretr}$.
\end{definition}

\begin{remark}
In the above definition and in the following we tacitly assume that $\ka$ has infinite direct sums or that $L/K$ is finite. 
\end{remark}

Clearly one would like to see that this definition does not depend on the enhancement. Unfortunately we were not able to prove this statement. This problem seems to be related to the fact that the internal $\Hom$-functor (which we use, cf.\ Proposition \ref{equiv-def-bc}) in the 2-category of DG-categories does not respect quasi-equivalences (see also Remark \ref{toens-rhom}). However, there is the following partial result. Note that a $K$-linear functor between two $K$-linear DG-categories $\ka$ and $\kb$ induces an $L$-linear functor from $\ka_L$ to $\kb_L$.

\begin{prop}\label{ind-choice}
Let $\ka$ and $\kb$ be two $K$-linear pretriangulated DG-categories, consider $\ka_L$ and $\kb_L$ and let $\Phi\colon \ka\rightarrow \kb$ be a quasi-equivalence. Write $\kt\cong H^0(\ka)\cong H^0(\kb)$. Assume that for all $(A,f), (A',g) \in \ka_L$ and for all $\varphi \in \Hom_\ka(A,A')$ we have that $d\varphi f=g d\varphi$ implies that $\varphi f=g\varphi$ (we call this condition $(\ast)$) and similarly for $\kb$. 
Furthermore, assume that the categories $H^\bullet(\ka)$ and $H^\bullet(\kb)$ are of finite type, that is, the morphism spaces are finite-dimensional. 
Then $\Phi^G$ is quasi fully faithful.

If in addition to the above assumptions there also exists an adjoint quasi-equivalence $\Psi$, then $H^0(\ka)_L$ and $H^0(\kb)_L$ are equivalent.
\end{prop}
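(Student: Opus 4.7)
My plan splits into two parts: first, establishing that $\Phi_L\colon\ka_L\to\kb_L$ is quasi fully faithful, and then promoting this to an equivalence of the base-change categories $H^0(\ka)_L$ and $H^0(\kb)_L$ via the adjoint quasi-equivalence $\Psi$. For the first part I would exploit $(\ast)$ to reduce to the quasi-fully-faithfulness of $\Phi$ itself. The key observation is that $(\ast)$ forces the inclusion of subcomplexes $\iota_{f,g}\colon\Hom^{f,g}(A,A')\to\Hom_\ka(A,A')$ to be a quasi-isomorphism for all $(A,f),(A',g)\in\ka_L$. Indeed, a cycle of degree $n$ in the quotient complex $Q:=\Hom_\ka(A,A')/\Hom^{f,g}(A,A')$ comes from some $\varphi\in\Hom_\ka(A,A')^n$ with $d\varphi\cdot f(l)=g(l)\cdot d\varphi$ for every $l\in L$; by $(\ast)$ this already forces $\varphi\cdot f(l)=g(l)\cdot\varphi$, so the class of $\varphi$ in $Q$ is zero. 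Hence $Q$ is acyclic and $\iota_{f,g}$ is a quasi-isomorphism; the same argument applies to $\kb$.

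Placing $\iota_{f,g}$ and $\iota_{\Phi f,\Phi g}$ as the vertical arrows of the square whose bottom row is $\Phi\colon\Hom_\ka(A,A')\to\Hom_\kb(\Phi A,\Phi A')$ and whose top row is its restriction to the $L$-compatible subcomplexes, two-out-of-three yields that the top map is a quasi-isomorphism, i.e.\ $\Phi_L$ is quasi fully faithful. Extending to pretriangulated hulls is formal, since $\Hom$-complexes in $\ka^{pretr}$ are matrices of shifted $\Hom$-complexes of $\ka$: $(\Phi_L)^{pretr}$ is then also quasi fully faithful, and the induced $H^0((\Phi_L)^{pretr})\colon H^0((\ka_L)^{pretr})\to H^0((\kb_L)^{pretr})$ is a fully faithful triangulated functor.

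For the second part, $H^0((\Phi_L)^{pretr})$ sends the generators of $H^0(\ka)_L$ (the images of $A\in\ka$ under the natural DG-functor $\ka\to\ka_L\hookrightarrow(\ka_L)^{pretr}$) to the corresponding generators of $H^0(\kb)_L$, so it restricts to a fully faithful triangulated functor $F\colon H^0(\ka)_L\to H^0(\kb)_L$. Since $F$ is fully faithful and $H^0(\ka)_L$ is thick, its essential image is a thick triangulated subcategory of $H^0(\kb)_L$; it therefore suffices to check that each generator of $H^0(\kb)_L$ lies in this essential image. For $B\in\kb$ the counit $\epsilon_B\colon\Phi\Psi B\to B$ of the adjunction is a closed degree-$0$ morphism that induces an isomorphism in $H^0(\kb)$, since $\Psi$ is a quasi-inverse of $\Phi$. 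Applying the natural DG-functor $\kb\to\kb_L$ to $\epsilon_B$ and passing to $H^0$ yields an isomorphism between the generator of $H^0(\kb)_L$ associated to $\Phi\Psi B$---which equals $F$ applied to the generator associated to $\Psi B$---and the generator associated to $B$. Thus $F$ is essentially surjective and hence an equivalence.

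The main obstacle is pinning down the role of $(\ast)$: once one recognises it as precisely the condition that makes $\Hom/\Hom^{f,g}$ have no cycles (and hence be acyclic), everything else is formal. The finite-type assumption does not visibly enter this sketch and is presumably needed elsewhere, for instance to arrange that the adjoint $\Psi$ can be taken to be a genuine DG-functor with DG-natural unit and counit (rather than a mere quasi-functor), or to control the passage from the homotopy-level adjunction back to the DG-level under base change; this is the point I would revisit when writing up the full proof.
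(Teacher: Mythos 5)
Your reduction of the first claim to the statement that the inclusions $\Hom^{f,g}(A,A')\hookrightarrow\Hom_\ka(A,A')$ are quasi-isomorphisms is correct and clean: condition $(\ast)$ says precisely that the quotient complex $\Hom_\ka(A,A')/\Hom^{f,g}(A,A')$ has no nonzero cycles, hence is acyclic, and your commutative square then yields quasi-fully-faithfulness of $\Phi_L$. The paper itself only refers to \cite[Lem.\ 3.11, Prop.\ 3.12]{PS1} for this step, so an explicit argument is welcome; you are also right to flag that the finite-type hypothesis plays no visible role in this route.

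The gap is in the second half, at the sentence ``its essential image is a thick triangulated subcategory of $H^0(\kb)_L$.'' The essential image of a fully faithful triangulated functor is closed under direct summands only if the relevant idempotents split in the source. Given $F(X)\cong Y\oplus Y'$ with $X\in H^0(\ka)_L$, full faithfulness transports the idempotent defining $Y$ to an idempotent $e'$ on $X$; the thickness of $H^0(\ka)_L$ inside $H^0((\ka_L)^{pretr})$ only helps once $e'$ is known to split in $H^0((\ka_L)^{pretr})$, and homotopy categories of pretriangulated hulls are not idempotent complete in general. Since $H^0(\kb)_L$ is by definition the \emph{thick} closure of the triangulated subcategory generated by the objects $p^*(B)$, knowing that the essential image is a triangulated subcategory containing these generators does not yet give all of $H^0(\kb)_L$. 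The repair is to prove the stronger statement that $\Phi_L\colon\ka_L\to\kb_L$ is a quasi-equivalence, which your own tools essentially provide: for an arbitrary $(B,h)\in\kb_L$, DG-naturality of the counit applied to the morphisms $h(l)$ shows that $\epsilon_B\colon\Phi\Psi B\to B$ is a closed degree-zero morphism $\Phi_L\Psi_L(B,h)\to(B,h)$ in $\kb_L$, and the quasi-isomorphisms of inclusions from the first part show that it becomes invertible in $H^0(\kb_L)$, not merely in $H^0(\kb)$. Then $(\Phi_L)^{pretr}$ is a quasi-equivalence, $H^0$ of it is an equivalence of the ambient triangulated categories, and an equivalence matches up the smallest thick subcategories generated by corresponding sets of objects with no idempotent-splitting issue. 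This is exactly the route the paper records (namely that $\ka_L$ and $\kb_L$ are quasi-equivalent, hence so are the pretriangulated hulls and the generated thick subcategories), deferring the details to \cite{PS1}.
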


\begin{proof}
Since the reasoning is very similar to the proof of \cite[Lem.\ 3.11, Prop.\ 3.12]{PS1}, we refer the reader to that paper. The proof there gives that $\ka_L$ and $\kb_L$ are quasi-equivalent, hence also the pretriangulated hulls and therefore also the respective triangulated categories generated by the image of $\kt$.
\end{proof}

Let us now consider the results this construction produces in some standard examples. 

\begin{prop}\label{smooth-bc}
Let $X$ be a smooth projective variety over $K$ and consider $\Db(X)$. Then $(\Db(X))_L$ is equivalent to $\Db(X_L)$. A similar result holds for the bounded derived category of quasi-coherent sheaves.
\end{prop}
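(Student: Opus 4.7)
The plan is to fix a strongly unique enhancement of $\Db(X)$ and then compare $\ka_L$ directly with a natural DG-enhancement of $\Db(X_L)$. A convenient choice is the DG-category $\ka = \ki(X)$ of bounded-below complexes of injective quasi-coherent sheaves on $X$ with bounded coherent cohomology; this is a pretriangulated enhancement of $\Db(X)$ whose strong uniqueness (cf.\ the remark above and \cite[Thm.\ 9.9]{LO}) means we need not invoke Proposition \ref{ind-choice}. For the $\Db(\text{QCoh}(X))$ variant one uses instead the DG-category of h-injective complexes in $\text{QCoh}(X)$; the argument below carries over verbatim.

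First I would give a geometric description of $\ka_L$. Any $L$-module structure $f\colon L \to \End_\ka(I^\bullet)$ must factor through the degree-zero cycles $\ker d^0$, i.e.\ through chain-map endomorphisms of $I^\bullet$, so $(I^\bullet, f)$ is precisely an object of $\Kom(\text{QCoh}(X))_L$. Combining Lemma \ref{bc-kom} with the equivalence $\text{QCoh}(X)_L \cong \text{QCoh}(X_L)$ of Example \ref{affinecase} identifies this datum functorially with the pullback complex $p^*I^\bullet$ on $X_L$, where $p\colon X_L \to X$ is the projection. On morphisms, the subcomplex $\Hom^{f,g}(I^\bullet, J^\bullet)$ matches $\Hom_{X_L}^{\bullet}(p^*I^\bullet, p^*J^\bullet)$ via the $p^* \dashv p_*$ adjunction applied componentwise, in a way compatible with the differentials.

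Next I would verify the side conditions. Flatness of $p$ preserves cohomology, and $p^*$ sends bounded coherent cohomology to bounded coherent cohomology (using affineness, and finiteness of $p$ when $L/K$ is finite). Applying Corollary \ref{inj} together with exactness of $p_*$ on quasi-coherent sheaves shows that injective $L$-modules in $\text{QCoh}(X)$ correspond to injective sheaves on $X_L$, so that $p^*$ maps the objects of $\ka$ to objects quasi-isomorphic to those of $\ki(X_L)$. Hence $\ka_L$ is quasi-equivalent to a full DG-subcategory of $\ki(X_L)$ whose essential image contains $p^*\Db(X)$. Passing to pretriangulated hulls and applying $H^0$ yields a fully faithful triangulated embedding $(\Db(X))_L \hookrightarrow \Db(X_L)$ containing the essential image of $p^*$.

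Finally, since $(\Db(X))_L$ is defined as a thick closure, the proposition reduces to showing that $p^*\Db(X)$ thickly generates $\Db(X_L)$. For $L/K$ finite Galois with group $G$, the projection formula gives $p^*p_*\kf \cong \bigoplus_{g \in G} g^*\kf$, exhibiting any $\kf \in \Db(X_L)$ as a direct summand of an object in $p^*\Db(X)$; the finite non-Galois case reduces to this by passing to the Galois closure, and the infinite case by writing $L = \varinjlim L_\alpha$ over finite subextensions and using finite presentation of coherent sheaves. The principal obstacle is precisely this thick-generation step, especially for non-Galois or infinite extensions, together with the bookkeeping needed so that the h-injective condition is respected up to quasi-isomorphism, since the pullback of an h-injective need not be h-injective on the nose.
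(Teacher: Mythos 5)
Your overall skeleton matches the paper's: take the enhancement by bounded-below complexes of injectives with bounded coherent cohomology, compare $\ka_L$ with the geometry of $X_L$, and reduce to showing that the image of $\Db(X)$ thickly generates $\Db(X_L)$. But your middle step runs in the wrong direction and overstates what Corollary \ref{inj} gives. That corollary only says that an injective object of $\kc_L\cong\text{QCoh}(X_L)$ pushes forward to an injective object of $\kc=\text{QCoh}(X)$, i.e.\ it yields a fully faithful embedding $I(\kc_L)\hookrightarrow I(\kc)_L$; it does \emph{not} give the ``correspondence'' you assert, and the paper explicitly flags the reverse inclusion $I(\kc)_L\subseteq I(\kc_L)$ as unclear in general (it holds for finite separable extensions by a separability/trace argument, but you would have to supply that). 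Consequently your claim that $\ka_L$ embeds quasi-fully-faithfully into $\ki(X_L)$ is exactly the unproven direction. The paper instead uses only the established inclusion to realise $\Db(X_L)=\widetilde{K}^+(I(\kc_L))$ as a full thick triangulated subcategory of $H^0((\ka_L)^{pretr})=\widetilde{K}^+(I(\kc)_L)$ containing the image of $\Db(X)$, which is all that is needed given the definition of $(\Db(X))_L$ as a smallest thick subcategory. Relatedly, your identification of a general object $(I^\bullet,f)$ of $\ka_L$ with a pullback complex $p^*I^\bullet$ is false: under $\Kom(\kc)_L\cong\Kom(\kc_L)$ such an object corresponds to an arbitrary complex on $X_L$ whose terms push forward to injectives, and only the objects in the image of $\Xi=p^*$ have the form you describe (e.g.\ the injective hull of a closed point of $X_L$ not defined over $K$ lies in $I(\kc)_L$ but is not a pullback).

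For the generation step you use the projection formula $p^*p_*\kf\cong\bigoplus_{g\in G}g^*\kf$ and reductions to the Galois closure and to finite subextensions. This is essentially the argument the paper reserves for Proposition \ref{noeth-bc} (Noetherian $X$, finite Galois $L/K$); it is workable here but forces the extra reductions you mention. For smooth projective $X$ the paper argues more directly: by Orlov's result $\Db(X_L)$ has a classical generator given by a direct sum of powers of a very ample line bundle, and this generator is pulled back from $X$, so the image of $\Db(X)$ thickly generates $\Db(X_L)$ for an arbitrary field extension, with no Galois or finiteness hypotheses. Finally, note that the strong uniqueness of the enhancement of $\Db(X)$ does not by itself excuse you from Proposition \ref{ind-choice}: the base change construction is not known to be invariant under quasi-equivalence (this is the whole point of that proposition and of Remark \ref{toens-rhom}), so, like the paper, you are in any case computing with one chosen enhancement.
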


\begin{proof}
We know that $\kt \cong \widetilde{K}^+(I(\kc))$, where $I(\kc)$ is the additive category of injective objects in $\kc={\rm{QCoh}}(X)$ and $\widetilde{K}^+(I(\kc))$ is the homotopy category of bounded-below complexes of injectives having only finitely many coherent cohomology objects. It is well-known that the DG-category of bounded-below complexes of injective objects with bounded coherent cohomology $\ka=C_{DG}(I(\kc))$ is an enhancement of $\kt$. Base change for this DG-category produces $C_{DG}(I(\kc)_L)$, which is a pretriangulated DG-category and therefore taking the pretriangulated hull does not change its homotopy category. Using Corollary \ref{inj} it is easy to see that $I(\kc_L)$ can be embedded as a full thick subcategory into $I(\kc)_L$ and hence $\Db(X_L)=\widetilde{K}^+(I(\kc_L))$ is a full triangulated thick subcategory in $\widetilde{K}^+(I(\kc)_L)=Ho(\ka_L)$ (where $\widetilde{K}^+$ is defined similarly as above). Clearly, $\Db(X_L)$ contains $\Db(X)$. In fact, $\Db(X_L)$ is the smallest thick triangulated subcategory of $Ho(\ka_L)$ with this property: In \cite{Orlov} it is shown that the category $\Db(X_L)$ has a classical generator, i.e.\ an object $E$ with the property that the smallest triangulated thick subcategory of $\Db(X_L)$ containing $E$ is everything. Now use that the classical generator $E$ is a direct sum of tensor powers of the very ample line bundle and therefore is in the image of the functor $\Db(X) \rightarrow \Db(X_L)$. The reasoning in the quasi-coherent case is similar.
\end{proof}

\begin{remark}
There exists a different enhancement of $\Db(X)$ if $X$ is smooth and projective. We will need some notation: Denote by $\kc(X)$ the pretriangulated DG-category consisting of bounded-below complexes of $\ko_X$-modules with bounded coherent cohomology. Now, we know that $\Db(X)$ is equivalent to the category $\Perf(X)$ of perfect complexes, that is, finite complexes of vector bundles. Choosing a finite affine covering $\mathcal{U}$ of $X$, one has the (strongly) pretriangulated DG category $\kp(\ku) \subset \kc(X)$ which, by definition, is the smallest full DG-subcategory of $\kc(X)$ containing all \v Cech resolutions of elements of $\Perf(X)$ and closed under taking cones of closed morphisms of degree zero. This category is an enhancement of $\Db(X)$ by \cite[Lem.\ 6.7]{BLL}. It is easy to see that the category $\kp(\ku)_L$ is equivalent to $\kp(\ku_L)$ (where $\ku_L$ is the affine covering of $X_L$ given by pulling back $\ku$) and hence its homotopy category is equivalent to $\Db(X_L)$. Using the same arguments as above one sees that our definition produces the expected result if one works with this enhancement.  
\end{remark}

There is a slight variation of the above result.

\begin{prop}\label{noeth-bc}
If $X$ is a noetherian scheme over $K$ and $L/K$ is a finite Galois extension, then $(\rm{D}^\ast(X))_L\cong \rm{D}^\ast(X_L)$, where $\ast=\text{b},+,-,\emptyset$. A similar result holds for the derived category of quasi-coherent sheaves.
\end{prop}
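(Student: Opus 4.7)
The plan is to follow the strategy of Proposition \ref{smooth-bc}, replacing the classical generator step (unavailable for a general Noetherian $X$) by a Galois-descent argument made possible because $L/K$ is finite Galois. Write $\kc = \text{QCoh}(X)$; by Example \ref{affinecase}, since $L/K$ is finite one has $\kc_L \cong \text{QCoh}(X_L)$, and the analogous statement holds for coherent sheaves.

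For $\ast \in \{b,+,-\}$ I take as enhancement $\ka = C^{\ast}_{DG}(I(\kc))$, restricted to complexes with bounded coherent cohomology when $\ast = b$, and for $\ast = \emptyset$ I enhance by h-injective complexes; in each case $\ka$ is pretriangulated and $H^0(\ka) \cong \D^{\ast}(X)$. The DG-extension of Lemma \ref{bc-kom} gives $\ka_L \cong C^{\ast}_{DG}(I(\kc)_L)$, and the Corollary following \ref{inj} supplies a full embedding $I(\kc_L) \hookrightarrow I(\kc)_L$ closed under direct summands. Combined with $\kc_L \cong \text{QCoh}(X_L)$ this realizes $\D^{\ast}(X_L)$ as a full thick triangulated subcategory of $H^0(\ka_L)$, and the DG-functor $\ka \to \ka_L$ induces, at the homotopy level, the pullback $p^{\ast}\colon \D^{\ast}(X) \to \D^{\ast}(X_L)$ along $p\colon X_L \to X$.

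The inclusion $(\D^{\ast}(X))_L \subseteq \D^{\ast}(X_L)$ is then immediate from minimality in the definition of the base-change category. For the reverse inclusion I invoke that $p\colon X_L \to X$ is a finite \'etale Galois cover with group $G = \Aut(L/K)$: the Cartesian square identifies $X_L \times_X X_L$ with $\bigsqcup_{\sigma \in G} X_L$, the two projections restricting to $\id$ and $\sigma$ on the $\sigma$-th copy, so flat base change yields
\[ p^{\ast} p_{\ast} F \;\cong\; \bigoplus_{\sigma \in G} \sigma^{\ast} F \]
for every $F \in \D^{\ast}(X_L)$. Hence $F$ is a direct summand of $p^{\ast}(p_{\ast} F)$; since $p$ is finite and flat, $p_{\ast}$ is exact and respects each boundedness condition, so $p_{\ast} F \in \D^{\ast}(X)$. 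Closedness of $(\D^{\ast}(X))_L$ under direct summands then forces $F$ into it. The coherent statement follows by running the same argument with $\text{Coh}$ in place of $\text{QCoh}$, using that $p$ is finite flat to stay inside the coherent world.

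The principal obstacle is bookkeeping rather than geometry: I must verify that the homotopy-level functor induced by $\ka \to \ka_L$ is genuinely $p^{\ast}$ (not merely a sum of copies of it) and, in the unbounded case $\ast = \emptyset$, that a suitable h-injective enhancement base-changes well enough for the thick embedding $I(\kc_L) \hookrightarrow I(\kc)_L$ to persist. Once those technicalities are handled, the splitting $p^{\ast} p_{\ast} F \cong \bigoplus_{\sigma} \sigma^{\ast} F$ supplies the geometric content, playing the role here that the classical generator played in the proof of Proposition \ref{smooth-bc}.
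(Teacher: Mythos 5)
Your proposal is correct and follows essentially the same route as the paper: set up the injective enhancement as in Proposition \ref{smooth-bc}, and replace the classical-generator step by the identity $p^*p_*F\cong\bigoplus_{\sigma\in G}\sigma^*F$, which exhibits every $F\in\D^\ast(X_L)$ as a direct summand of an object pulled back from $\D^\ast(X)$ so that thickness finishes the argument. Your derivation of that formula from the splitting $X_L\times_X X_L\cong\bigsqcup_{\sigma\in G}X_L$ and your remarks on the unbounded case merely flesh out details the paper leaves implicit.
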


\begin{proof}
As in the previous proposition one shows that $\Db(X_L)$ contains $\Db(X)$. To show that $\Db(X_L)$ is indeed the smallest thick triangulated subcategory of $Ho(\ka_L)$ (notation as before) one uses the formula
\[p^*p_*(E)=\sum_{g \in G} g^*(E),\]
where $p: X_L \rightarrow X$ is the projection and $G$ is the Galois group.
\end{proof}

We also have the following result in the non-geometric situation:

\begin{prop}\label{abcat-bc}
Let $\kc$ be an abelian category with enough injectives and with generators (for details see e.g.\ \cite[Ch.\ II, 15]{Mitchell2}). Then $\Db(\kc)_L$ is equivalent to $\Db(\kc_L)$.
\end{prop}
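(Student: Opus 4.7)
The plan is to follow the strategy of Propositions \ref{smooth-bc} and \ref{noeth-bc}, replacing the geometric input by abstract arguments. First I would fix the enhancement $\ka = C_{DG}^+(I(\kc))_{\mathrm{bd}}$ consisting of bounded-below complexes of injectives with bounded cohomology, which realises $\Db(\kc) \cong \widetilde{K}^+(I(\kc))$ just as in the proof of Proposition \ref{smooth-bc}. By Lemma \ref{bc-kom} applied to the additive category $I(\kc)$, the DG-category $\ka_L$ is naturally identified with bounded-below complexes in $I(\kc)_L$ with bounded cohomology; in particular $\ka_L$ is already pretriangulated, so passing to its pretriangulated hull does not alter the homotopy category, and $\kt_L$ is the smallest thick triangulated subcategory of $Ho(\ka_L)$ containing the image of $\kt$ under the natural functor $\ka \to \ka_L$, which corresponds to $p^*$.

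Next I would establish the inclusion $\kt_L \subseteq \Db(\kc_L)$. By Corollary \ref{inj} together with the Remark following it (based on \cite[Prop.\ 4.8]{LvdB}), the category $\kc_L$ has enough injectives; moreover it inherits a generating family from $\kc$ by applying $p^*$, so $\Db(\kc_L) \cong \widetilde{K}^+(I(\kc_L))$. The Corollary immediately after \ref{inj} provides a fully faithful embedding $I(\kc_L) \hookrightarrow I(\kc)_L$ whose image is closed under direct summands, and this upgrades to a fully faithful thick embedding
\[
\Db(\kc_L) \cong \widetilde{K}^+(I(\kc_L)) \hookrightarrow \widetilde{K}^+(I(\kc)_L) \subseteq Ho(\ka_L).
\]
Since $p^*$ factors through this embedding at the level of DG-categories, we obtain $\kt_L \subseteq \Db(\kc_L)$.

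The reverse inclusion $\Db(\kc_L) \subseteq \kt_L$ is the main obstacle. By standard d\'evissage via canonical truncations it reduces to showing that every object $(M, g) \in \kc_L$ belongs to the thick triangulated subcategory of $Ho(\ka_L)$ generated by $p^*(\Db(\kc))$. For this I would analyse the counit of adjunction $p^* p_*(M, g) \to (M, g)$ and argue that $(M, g)$ is a direct summand of $p^* p_*(M, g) = p^*(M)$, which lies in $p^*(\Db(\kc))$. In the Galois case this is the decomposition $p^* p_* \cong \bigoplus_{\sigma} \sigma^*$ used in the proof of Proposition \ref{noeth-bc}; for a finite separable extension the splitting follows from the \'etale decomposition of $L \otimes_K L$; and the general finite case is reduced to the separable one by first base changing to the separable closure of $K$ inside $L$ and then applying Lemma \ref{gal-descent} to the Galois closure. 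Combining the two inclusions yields the claimed equivalence.
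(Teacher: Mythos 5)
Your construction of the enhancement, the identification of $\ka_L$ via Lemma \ref{bc-kom}, and the inclusion $\kt_L\subseteq\Db(\kc_L)$ all follow the template of Proposition \ref{smooth-bc} and match the paper. For the reverse inclusion you diverge: the paper's own (one-line) argument is that the images $\Xi(C_i)=p^*(C_i)$ of a generating family of $\kc$ form a generating family of $\kc_L$ (citing \cite[Prop.\ 4.8]{LvdB}), with no separability hypothesis anywhere, while you argue that every $(M,g)\in\kc_L$ is a direct summand of $p^*p_*(M,g)$. For finite Galois extensions this is exactly the mechanism of Proposition \ref{noeth-bc}, and your extension to finite separable extensions is correct and worth spelling out: if $e=\sum a_i\otimes b_i\in L\otimes_KL$ is the separability idempotent, then $m\mapsto\sum a_i\otimes g(b_i)(m)$ splits the counit $p^*p_*(M,g)\to(M,g)$ in $\kc_L$.

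The gap is the final reduction of ``the general finite case'' to the separable one; this step fails. If $L/K$ is purely inseparable, $L\otimes_KL$ is a local Artinian ring, so already $(L,\mathrm{mult})$ is not a direct summand of $p^*p_*(L)\cong L\otimes_KL$ and the counit does not split. Base changing to the separable closure of $K$ inside $L$ only deals with the bottom, separable layer and leaves the purely inseparable top layer untouched, and Lemma \ref{gal-descent} concerns Galois (in particular separable) extensions, so there is no ``Galois closure'' to apply it to. The failure is not cosmetic: for $\kc=\Mo(L)$ with $L/K$ purely inseparable one has $\kc_L\cong\Mo(L\otimes_KL)$, the image of $p^*$ consists of complexes of free modules, and the thick triangulated subcategory they generate sees only objects of finite projective dimension over the non-regular local ring $L\otimes_KL$, hence misses the residue field. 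So the summand strategy cannot be salvaged in the inseparable case, and since the proposition is stated for an arbitrary (not necessarily finite, not necessarily separable) extension, your proof covers strictly less than the statement; to match the stated generality one has to fall back on the generator argument the paper invokes (which, as this example indicates, itself deserves scrutiny outside the separable setting).
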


\begin{proof}
One uses enhancements by injective objects and the fact that if $(C_i)_{i \in I}$ is a set of generators for $\kc$, then $(\Xi(C_i))_{i \in I}$ is a set of generators for $\kc_L$, cf.\ \cite[Prop.\ 4.8]{LvdB}. 
\end{proof}

\begin{remark}\label{toens-rhom}
One of the main results of \cite{Toen} is the construction of an internal Hom-functor $\RHom$ in the homotopy category of DG-categories, that is, the 2-category obtained by localisation with respect to quasi-equivalences. Thus, a possible enhancement-independent definition of a scalar extension could be given by $\RHom(1_L, \ka)$, where $\ka$ is any enhancement of the given triangulated category $\kt$. However, we know that
\[\RHom_c(\Db_{DG}(X), \Db_{DG}(Y))\cong \Db_{DG}(X\times Y)\]
for $X$, $Y$ smooth projective, where $\RHom_c$ is the category of so-called continuous functors and $\Db_{DG}$ denotes an enhancement of the bounded derived category (see \cite[Sect.\ 8]{Toen} for details). On the other hand, a similar result also holds for the unbounded categories. Hence it seems that $\RHom(1_L, \ka)$ might not be the right thing to do. 
\end{remark}

\begin{remark}
Let $\kc$ be an abelian category. It is an interesting question whether one could actually define the base change category of $\Db(\kc)$ (or $K^b(\kc)$) simply as $Ho((\ka_L)^{pretr})$ for an enhancement $\ka$ of $\Db(\kc)$. Let us investigate the general case: Let $\kc$ be an abelian category with enough injectives and consider $\kt=\Db(\kc)$. Then $Ho((\ka_L)^{pretr})=\widetilde{K}^+(I(\kc)_L)$, where the latter denotes bounded-below complexes of objects in $I(\kc)_L$ with finitely many cohomology objects. The proof of the statement $Ho((\ka_L)^{pretr})=\Db(\kc_L)$ boils down to proving the equivalence $\widetilde{K}^+(I(\kc)_L) \cong \widetilde{K}^+(I(\kc_L))$, where of course $I(\kc_L)$ denotes the category of injective objects in $\kc_L$ and hence $\Db(\kc_L) \cong \widetilde{K}^+(I(\kc_L))$. As above it is easy to see that $\widetilde{K}^+(I(\kc_L))$ is a full triangulated subcategory in $\widetilde{K}^+(I(\kc)_L)$. In order to prove that the embedding is essentially surjective, one would in particular have to show that for any injective object $I \in  I(\kc)$ and any module structure $f$ the object $(I,f)$ is in the image. This reduces to the statement that $(I,f)$ is isomorphic to an injective object in $\kc_L$. Thus, one has to show the equality $I(\kc)_L \cong I(\kc_L)$. It is unclear under which conditions this can be proved.  
\end{remark}



We conclude this section by sketching a different approach towards the definition of base change which was suggested to us by Prof.\ V.\ A.\ Lunts. Following Keller one calls a triangulated category $\kt$ \emph{algebraic} if it is the stable category of a Frobenius exact category (for the definition of the latter see \cite[Ch.\ IV.3, Ex.\ 4-8]{GeMa}). There is a close connection between algebraic triangulated categories and derived categories of DG-categories (see \cite{Keller2}). We illustrate it in a special case: Namely, by a result of Rouquier \cite{Rouquier} the derived category $\Db(X)$ of a quasi-projective scheme $X$ over a perfect field $K$ is equivalent to $\Perf(A)$, the category of perfect complexes over a DG-algebra $A$, i.e.\ the smallest thick subcategory of the derived category of $A$ containing $A$. Here, $A$ is determined by a strong generator (see Definition \ref{str-gen} in the next section) $E$ of $\Db(X)$. To be more precise, $A=\RHom(E,E)$. One could simply define the base change category $\Db(X)_L$ as $\Perf(A\otimes_K L)$. Let us check that this gives the wanted result: We can assume $E$ to consist of injective objects and therefore $\RHom(E,E)$ is just the complex $\Hom_{DG}(E,E)$. Then by \cite[Thm.\ 2.1.2 and Lem.\ 3.4.1]{BonvdB} the object $p^*(E)$ is a generator of $\Db(X_L)$ (here we tacitly assume that the field extension is finite since we need $\Spec(L)$ to be a scheme of finite type over $\Spec(K)$). Hence $\Db(X_L)\cong \Perf(B)$, where $B=\RHom(p^*(E),p^*(E))=\Hom_{DG}(p^*(E),p^*(E))$, where the second equality holds because the pullback of an injective sheaf is injective. But
\[B=\Hom_{DG}(p^*(E),p^*(E))=\Hom_{DG}(E,E)\otimes_K L=A\otimes L.\]
Hence $\Db(X_L)\cong \Perf(A\otimes_K L)$.

If one chooses a different generator $E'$, the same proof shows that $\Perf(A'\otimes_K L)$ is again equivalent to $\Db(X_L)$. Here, the choice of an enhancement is somewhat hidden, but it is indeed present, because we need the DG-structure to define the DG-algebra $A$. 

This definition is certainly more elegant and one could apply it to a vast class of examples, since most triangulated categories arising in algebraic geometry (and representation theory) are in fact algebraic. In the general case one does not have an equivalence between $\kt$ and the category of perfect complexes over some DG-algebra, but $\kt$ is rather equivalent to (a full subcategory of) the derived category $\text{D}(\ka)$ of some DG-category $\ka$ (for the definition of $\text{D}(\ka)$ see \cite{Keller}). For the last statement one has to impose some conditions on $\kt$. One could then define the base change category as (a certain subcategory of) the derived category of $\ka\otimes_K 1_L$. The disadvantage of this approach is that the DG-algebras resp.\ DG-categories that appear are in general very difficult to describe.  



\section{Dimension under scalar extensions}

In \cite{Rouquier} the dimension of a triangulated category was introduced. To recall the definition we need some notation. If $\ki$ is a subcategory of a triangulated category $\kt$, then $\left\langle \ki\right\rangle$ denotes the smallest full subcategory of $\kt$ which contains $\ki$ and is closed under isomorphisms, finite direct sums, direct summands and shifts. If $\ki_1$ and $\ki_2$ are two subcategories, then $\ki_1 \ast \ki_2$ is the full subcategory of objects $M$ in $\kt$ such that there exists a triangle $\begin{xy}\xymatrix{I_1 \ar[r] & M \ar[r] & I_2}\end{xy}$ with $I_i \in \ki_i$. We also define $\ki_1 \diamond \ki_2=\left\langle \ki_1 \ast \ki_2 \right\rangle$ and set inductively $\left\langle\ \ki \right\rangle_k=\left\langle \ki \right\rangle_{k-1} \diamond \left\langle \ki \right\rangle$. If $\ki$ consists of one object $E$ we denote $\left\langle \ki \right\rangle$ by $\left\langle E \right\rangle_1$ and set $\left\langle E \right\rangle_k=\left\langle E 
 \right\rangle_{k-1} \diamond \left\langle E \right\rangle_1$.

\begin{definition}\label{str-gen}
An object $E$ of $\kt$ is called a \emph{strong generator} if $\left\langle E \right\rangle_n=\kt$ for some $n \in \IN$. The \emph{dimension} of $\kt$ is the smallest integer $d$ such that there exists an object $E$ with $\left\langle E \right\rangle_{d+1}=\kt$. The \emph{dimension spectrum} of $\kt$ is the set of all integers $k$ such that there exists an $E$ with the property that $\left\langle E \right\rangle_{k+1}=\kt$ but $\left\langle E \right\rangle_k \neq \kt$.
\end{definition}

\begin{ex}
For a smooth affine scheme Rouquier showed that $\dim(X)=\dim(\Db(X))$ (cf.\ \cite{Rouquier}). In \cite{Orlov} Orlov showed that the dimension of the bounded derived category of a smooth projective curve $C$ of genus $g \geq 1$ is $1$ and conjectured that for any smooth quasi-projective variety $X$ the equality $\dim(\Db(X))=\dim(X)$ holds. In \cite{BF} the conjecture was verified for triangulated categories possessing a so called tilting object, which is true e.g.\ for the derived categories of del Pezzo surfaces with Picard rank at most seven and Hirzebruch surfaces. 
\end{ex}

We will now investigate the following natural question: How does the dimension of a triangulated category behave under scalar extensions?

\begin{prop}\label{dim-ab-bc}
Let $\kc$ be an abelian category with enough injectives and generators and let $L/K$ be a finite Galois extension. Assume that the dimension of $\Db(\kc)$ is finite. Then $\dim(\Db(\kc)_L)= \dim(\Db(\kc))$.
\end{prop}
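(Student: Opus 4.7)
By Proposition \ref{abcat-bc} we may identify $\Db(\kc)_L$ with $\Db(\kc_L)$, so the statement reduces to $\dim\Db(\kc_L)=\dim\Db(\kc)$. The plan is to exploit the adjoint pair $p^{\ast}\colon \kc\rightleftarrows \kc_L\colon p_{\ast}$ from Section~2. Both functors are exact ($p^{\ast}$ because it sends $X$ to the finite direct sum $X^{\oplus n}$ with $n=[L:K]$ endowed with an $L$-module structure; $p_{\ast}$ because it merely forgets the $L$-action), so they descend to exact $K$-linear triangulated functors on the bounded derived categories. In particular they preserve distinguished triangles, finite direct sums, direct summands and shifts, hence for any object $X$ in the source category and any $k\ge 1$ one has $p^{\ast}\bigl(\langle X\rangle_k\bigr)\subseteq\langle p^{\ast}X\rangle_k$ and likewise for $p_{\ast}$.

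The two key computational inputs are (i) the projection formula $p_{\ast}p^{\ast}(A)\cong A^{\oplus n}$, which is immediate from the definition of $p^{\ast}$, and (ii) the Galois formula
\[
p^{\ast}p_{\ast}(F)\;\cong\;\bigoplus_{g\in G} g^{\ast}(F),
\]
which, as in the proof of Proposition \ref{noeth-bc}, follows by reducing via the Mitchell embedding of Lemma \ref{gal-descent} to the module case, where it is exactly the decomposition $L\otimes_K L\cong\prod_{g\in G}L$ afforded by the normal basis theorem. Both formulas show in particular that $A$ (resp.\ $F$) is a direct summand of $p_{\ast}p^{\ast}(A)$ (resp.\ $p^{\ast}p_{\ast}(F)$).

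With these tools in hand, the proof of both inequalities is uniform. For $\dim\Db(\kc_L)\le\dim\Db(\kc)$, let $d=\dim\Db(\kc)$ and pick a strong generator $E$ of $\Db(\kc)$ with $\langle E\rangle_{d+1}=\Db(\kc)$. For arbitrary $F\in\Db(\kc_L)$ we have $p_{\ast}F\in\langle E\rangle_{d+1}$, hence $p^{\ast}p_{\ast}F\in\langle p^{\ast}E\rangle_{d+1}$, and since $F$ is a direct summand of $p^{\ast}p_{\ast}F$ by (ii) and $\langle p^{\ast}E\rangle_{d+1}$ is closed under summands, $F\in\langle p^{\ast}E\rangle_{d+1}$. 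Thus $p^{\ast}E$ is a strong generator of $\Db(\kc_L)$ in $d+1$ steps. For the reverse inequality, start with a strong generator $F$ of $\Db(\kc_L)$ with $\langle F\rangle_{d'+1}=\Db(\kc_L)$, where $d'=\dim\Db(\kc_L)$. For any $A\in\Db(\kc)$ one has $p^{\ast}A\in\langle F\rangle_{d'+1}$, so $p_{\ast}p^{\ast}A\in\langle p_{\ast}F\rangle_{d'+1}$, and by (i) the object $A$ is a summand of $p_{\ast}p^{\ast}A=A^{\oplus n}$, so $A\in\langle p_{\ast}F\rangle_{d'+1}$.

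The step I expect to require most care is checking the Galois formula (ii) at the level of $\kc_L$ rather than on plain module categories: one must verify that the $G$-action on $\kc_L$ introduced before Lemma \ref{gal-descent} pulls back correctly through $p^{\ast}p_{\ast}$, and that the decomposition is compatible with the $L$-linear structure. Once this is granted (either directly on objects or via the Mitchell embedding and restriction of scalars), the rest of the argument is a formal consequence of the exactness of $p^{\ast}$ and $p_{\ast}$ and the closure of $\langle -\rangle_k$ under summands.
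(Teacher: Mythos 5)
Your argument is correct, and its ``$\leq$'' half coincides with the paper's: both rest on the fact that every object of $\Db(\kc_L)$ is a direct summand of an object pulled back from $\Db(\kc)$ (you extract this from $p^*p_*(F)\cong\oplus_{g\in G}g^*(F)$, the paper from Galois descent applied directly to $\oplus_{g}g^*(F)$ --- the same fact in two guises). The ``$\geq$'' half is where you genuinely diverge. The paper descends the generating data: it sets $\widetilde{E}=\oplus_g g^*(F)$, invokes Galois descent to write $\widetilde{E}=p^*(E)$, and then argues by induction on the number of steps that a triangle exhibiting $p^*(M)$ as built from $F$ yields, after applying all $g\in G$ and summing, a Galois-equivariant triangle descending to one that builds $M^{\oplus d}$ from $E$ in $\Db(\kc)$. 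You instead push forward: $p^*A\in\langle F\rangle_{d'+1}$ gives $A^{\oplus n}\cong p_*p^*A\in\langle p_*F\rangle_{d'+1}$, and closure under summands finishes. Your version is shorter, sidesteps the somewhat delicate point of descending distinguished triangles (Lemma \ref{gal-descent} is stated only at the level of the abelian category, so the paper's induction implicitly needs an extra word there), and in fact uses the Galois hypothesis only in the ``$\leq$'' direction --- your ``$\geq$'' inequality holds for any finite extension, since it needs nothing beyond exactness of $p_*$ and $p_*p^*\cong(-)^{\oplus n}$. What the paper's route buys is an explicit descended generator $E$ with $p^*(E)\cong\oplus_g g^*(F)$; this carries essentially the same information as your $p_*F$, since $p_*(p^*(E))\cong E^{\oplus n}$ while $p_*(\oplus_g g^*(F))\cong (p_*F)^{\oplus n}$, so the two generate the same thick subcategory. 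One cosmetic remark: the isomorphism $L\otimes_K L\cong\prod_{g\in G}L$, via $a\otimes b\mapsto (a\,g(b))_g$, is the standard characterization of finite Galois extensions and does not require the normal basis theorem.
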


\begin{proof}
We know that $\Db(\kc)_L \cong \Db(\kc_L)$ (Proposition \ref{abcat-bc}). By Lemma \ref{gal-descent} the category $\Db(\kc)$ is dense in $\Db(\kc_L)$, since for any object $A \in \Db(\kc_L)$ the object $\oplus_{g \in G} g^*(A)$ is invariant under the Galois action and hence is isomorphic to an object of $\Db(\kc)$. Recall that we write $p^*$ for the functor $\Xi$ of Section 3.1. If $\left\langle E \right\rangle^{\Db(\kc)}_n=\Db(\kc)$ for some $E$ in $\Db(\kc)$, then, by the above argument, $\left\langle p^*(E) \right\rangle^{\Db(\kc_L)}_n=\Db(\kc_L)$. This gives the inequality ``$\leq$''. 

For the converse consider a strong generator $F$ in $\Db(\kc_L)$ and denote the dimension of $\Db(\kc_L)$ by $n$. Assume that an object $M \in \Db(\kc)$ can be reached from $F$ in one step, i.e.\ that there exists a triangle
\[\begin{xy}\xymatrix{ F[-1] \ar[r] & F \ar[r] & p^*(M) \ar[r] & F.}\end{xy}\]
Applying all $g \in G$ to this triangle, taking the direct sum and denoting the object $\oplus_{g \in G}g^*(F)$ by $\widetilde{E}$ gives the triangle
\[\begin{xy}\xymatrix{ \widetilde{E}[-1] \ar[r] & \widetilde{E} \ar[r] & \oplus_{g \in G} g^*(p^*(M))=p^*(M)^{\oplus d} \ar[r] &\widetilde{E}.}\end{xy}\]
By Galois descent this is a triangle in $\Db(\kc)$ and, therefore, the object $M^{\oplus d}$ can be built from $E$, where $p^*(E)=\widetilde{E}$, in one step. Induction on the number of steps gives the inequality ``$\geq$''. 
\end{proof}

The same arguments also give
\begin{prop}\label{dim-noe-bc}
Let $X$ be a smooth projective variety over $K$ and $L/K$ be a finite Galois extension. If $\dim(\Db(X))$ is finite, then $\dim(\Db(X))=\dim(\Db(X_L))$. In particular, if $\dim(\Db(X))=\dim(X)$, then $\dim(\Db(X_L))=\dim(X_L)=\dim(X)$ for any finite Galois extension.\qqed
\end{prop}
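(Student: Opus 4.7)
The plan is to mirror the proof of Proposition~\ref{dim-ab-bc} in the geometric setting. By Proposition~\ref{smooth-bc} one already has $\Db(X)_L \simeq \Db(X_L)$, and the group $G=\mathrm{Gal}(L/K)$ acts on this category via $g\mapsto g^*$. The one extra ingredient needed is Galois descent for $\Db(X)$, which comes from the formula $p^*p_*(A)\simeq\bigoplus_{g\in G}g^*(A)$ already invoked in Proposition~\ref{noeth-bc}: any object in $\Db(X_L)$ of the form $\bigoplus_{g\in G}g^*(F)$ is automatically Galois-invariant and, by averaging over $G$, splits off $p^*(p_*(F))$ as a summand, so is isomorphic to $p^*(E)$ for some $E\in\Db(X)$.

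For the direction $\dim(\Db(X_L))\le\dim(\Db(X))$, take a strong generator $E$ of $\Db(X)$ with $\langle E\rangle_{n+1}=\Db(X)$. Since $p^*$ is an exact $K$-linear functor commuting with shifts and direct sums, it sends any chain of triangles witnessing $\langle E\rangle_k$-membership to a chain witnessing $\langle p^*(E)\rangle_k$-membership in $\Db(X_L)$. Thus $p^*(\Db(X))\subseteq\langle p^*(E)\rangle_{n+1}$; but the latter is already thick and triangulated, while by Proposition~\ref{smooth-bc} the image $p^*(\Db(X))$ thickly generates $\Db(X_L)$. Hence $\langle p^*(E)\rangle_{n+1}=\Db(X_L)$ and $\dim(\Db(X_L))\le n$.

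For the reverse inequality, let $F$ be a strong generator of $\Db(X_L)$ with $\langle F\rangle_{n+1}=\Db(X_L)$, where $n=\dim(\Db(X_L))$, and set $\widetilde{E}:=\bigoplus_{g\in G}g^*(F)$. By the descent observation above, $\widetilde{E}\simeq p^*(E)$ for some $E\in\Db(X)$. Given any $M\in\Db(X)$, there is a chain of triangles realizing $p^*(M)\in\langle F\rangle_{n+1}$; applying each $g\in G$ to this chain and taking the direct sum of the $|G|$ resulting diagrams produces a chain of triangles in $\Db(X_L)$ with $F$ replaced by $\widetilde{E}$ and $p^*(M)$ replaced by $p^*(M)^{\oplus d}$, where $d=|G|$. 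By Galois descent this chain descends to a chain of triangles in $\Db(X)$, exhibiting $M^{\oplus d}\in\langle E\rangle_{n+1}$; closure of $\langle E\rangle_{n+1}$ under summands then gives $M\in\langle E\rangle_{n+1}$, so $\dim(\Db(X))\le n$. The main obstacle is precisely this last descent step: one must verify that the entire Galois-symmetrized chain of triangles, and not merely its endpoints, descends compatibly to $\Db(X)$, which uses that both $p^*$ and $p_*$ are triangulated. Finally, the ``in particular'' assertion is immediate, since $\dim(X_L)=\dim(X)$ for any finite extension $L/K$, so combining with the main equality yields $\dim(\Db(X_L))=\dim(\Db(X))=\dim(X)=\dim(X_L)$.
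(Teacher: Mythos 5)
Your overall strategy is exactly the paper's: the proof of Proposition \ref{dim-noe-bc} is declared there to be ``the same arguments'' as Proposition \ref{dim-ab-bc}, namely denseness of $p^*(\Db(X))$ in $\Db(X_L)$ for the inequality $\leq$, and Galois symmetrization $\widetilde{E}=\bigoplus_{g\in G}g^*(F)\cong p^*(p_*F)$ together with descent of the chain of triangles for $\geq$. Your third paragraph reproduces the paper's argument for $\geq$, and the caveat you raise about descending the whole chain rather than just its endpoints is precisely the point the paper also glosses over.

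There is, however, one step in your $\leq$ direction that fails as written: you assert that $\langle p^*(E)\rangle_{n+1}$ ``is already thick and triangulated'' and conclude that it equals $\Db(X_L)$ because $p^*(\Db(X))$ generates the latter as a thick triangulated subcategory. But $\langle -\rangle_{n+1}$ is by definition only closed under isomorphisms, shifts, finite direct sums and direct summands; it is \emph{not} closed under cones (if it were, $\langle E\rangle_1$ would already be all of $\Db(X)$ and the notion of dimension would be vacuous). So thick triangulated generation by the image of $p^*$ is not enough to force equality. The correct deduction --- and the one the paper makes --- uses the stronger denseness statement you yourself set up in your first paragraph: every $A\in\Db(X_L)$ is a direct summand of $\bigoplus_{g\in G}g^*(A)\cong p^*(p_*A)$; since $p_*A\in\left\langle E\right\rangle^{\Db(X)}_{n+1}$ and $p^*$ respects the operations defining $\left\langle -\right\rangle_k$, one gets $p^*(p_*A)\in\left\langle p^*E\right\rangle^{\Db(X_L)}_{n+1}$, and then $A$ lies there by closure under summands. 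With that one-line repair your argument coincides with the intended proof.
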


\begin{remark}
In the proof of Proposition \ref{dim-ab-bc} we have seen that for a finite Galois extension the category $p^*(\Db(\kc))$ is dense in $\Db(\kc_L)$. One could try and use this to define scalar extension without enhancements as follows. Let $\kt$ be a triangulated category over $K$ and let $L/K$ be a finite Galois extension. Set $\kt'$ to be the additive category $\kt\otimes L$: The objects in this category are the same as in $\kt$ and for any two objects $T$, $T'$ we set
\[\Hom_{\kt'}(T,T'):=\Hom_\kt (T,T')\otimes_K L.\]
Now consider the fully faithul Yoneda embedding of $\kt'$ into the category $\kb:=\text{Fun}(\kt',L-\text{Vec})$ of additive functors from $\kt'$ to the category of $L$-vector spaces. Denote by $\widetilde{\kt'}$ the closure of $\kt'$ in $\kb$ under direct summands, that is, the objects of $\widetilde{\kt'}$ are those objects $B$ in $\kb$ such that there exists a $B' \in \kb$ with $B\oplus B' \in \kt'$. If $\kt=\Db(\kc)$ as above, then it is clear that $\kt'$ ican be identified with $p^*(\Db(\kc))$ and hence $\widetilde{\kt'}$ is equivalent to $\Db(\kc_L)$. However, it is difficult to see how one can define a triangulated structure on $\widetilde{\kt'}$ in general. 
\end{remark}

\end{document}